\newtheorem{lemma}{Lemma}[section]
\newtheorem{theorem}[lemma]{Theorem}
\newtheorem{corollary}[lemma]{Corollary}
\newtheorem{proposition}[lemma]{Proposition}
\theoremstyle{definition}
\newtheorem{example}[lemma]{Example}
\newtheorem{remark}[lemma]{Remark}
\theoremstyle{remark}
\theoremstyle{definition}
\newtheorem{definition}[lemma]{Definition}
\renewcommand{\geq}{\geqslant}
\renewcommand{\leq}{\leqslant}
\newcommand{\K}{\ensuremath{K}} 
\renewcommand{\AA}{\ensuremath{\mathbb{A}}}
\newcommand{\NN}{\ensuremath{\mathbb{N}}}
\newcommand{\ZZ}{\ensuremath{\mathbb{Z}}} 
\newcommand{\Hc}{\ensuremath{\mathbb{H_{\bf c}}}}
\newcommand{\cH}{\ensuremath{\mathcal{H}}} 
\newcommand{\cM}{\ensuremath{\mathcal{M}}}
\newcommand{\cN}{\ensuremath{\mathcal{N}}}
\newcommand{\cP}{\ensuremath{\mathcal{P}}} 
\newcommand{\cS}{\ensuremath{\mathcal{S}}} 
\newcommand{\cT}{\ensuremath{\mathcal{T}}} 
\newcommand{\cW}{\ensuremath{\mathcal{W}}} 
\renewcommand{\c}{\ensuremath{{\mathbf c}}}
\newcommand{\opp}{\mathrm{opp}}
\newcommand{\<}{\langle}
\DeclareMathOperator{\Hilb}{Hilb}
\DeclareMathOperator{\Hom}{Hom}
\DeclareMathOperator{\lcm}{lcm}
\DeclareMathOperator{\Spec}{Spec}
\DeclareMathOperator{\inn}{in}
\DeclareMathOperator{\Mon}{Mon}
\DeclareMathOperator{\lc}{lc}
\DeclareMathOperator{\rk}{rk}
\begin{document}

\title{The $T$-graph of a multigraded Hilbert scheme}

\author{Milena Hering}
\address{Department of Mathematics \\
University of Connecticut \\
196 Auditorium Road \\
Storrs CT 06269 \\
USA}

\email{milena.hering@uconn.edu}

\author{Diane Maclagan}

\address{Mathematics Institute\\
Zeeman Building\\
University of Warwick\\
Coventry CV4 7AL\\
United Kingdom}

\email{D.Maclagan@warwick.ac.uk}

\dedicatory{Dedicated to the memory of Mikael Passare}

\begin{abstract}
The $T$-graph of a multigraded Hilbert scheme records the zero and
one-dimensional orbits of the $T = (\K^*)^n$ action on the Hilbert
scheme induced from the $T$-action on $\mathbb A^n$.  It has vertices
the $T$-fixed points, and edges the one-dimensional $T$-orbits.  We
give a combinatorial necessary condition for the existence of an edge
between two vertices in this graph.  For the Hilbert scheme of points
in the plane, we give an explicit combinatorial description of the
equations defining the scheme parameterizing all one-dimensional torus
orbits whose closures contain two given monomial ideals.  For this
Hilbert scheme we show that the $T$-graph depends on the ground field,
resolving a question of Altmann and Sturmfels.
\end{abstract}

\maketitle

\section{Introduction}

The Hilbert scheme of points in the plane is a classical and
well-studied space, and an important technique in its study is to
consider the fixed points of the action of the torus $(\K^*)^2$ on the
Hilbert scheme.  However there is no known combinatorial condition
deciding whether two fixed points, which correspond to monomial ideals
in $\K[x,y]$, lie in the closure of a one-dimensional torus orbit. In
this paper we give a necessary combinatorial condition in the more
general context of multigraded Hilbert schemes.

The multigraded Hilbert scheme, introduced by Haiman and Sturmfels in
\cite{HaimanSturmfels}, parameterizes subschemes $Z$ of $\mathbb A^n$
invariant under the action of an abelian group for which $H^0(\mathcal
O_Z)$ has a prescribed decomposition into irreducible representations.
Equivalently, $\Hilb^h_S$ parameterizes all ideals $I$ in the
polynomial ring $S=\K[x_1,\dots,x_n]$ that are homogeneous with
respect to a grading by an abelian group $A$, and have a fixed
multigraded Hilbert function $h: A \rightarrow \mathbb N$ given by
$h(a)=\dim_{\K}(S/I)_a$.  Specific examples of multigraded Hilbert
schemes include the Grothendieck Hilbert scheme of subschemes of
projective space, Hilbert schemes of points in affine space, and
$G$-Hilbert schemes for abelian groups $G$.

The action of $T = (\K^*)^n$ on $\mathbb A^n$ induces an action of $T$
on $\Hilb^h_S$ whose fixed points are the monomial ideals in
$\Hilb^h_S$.  The $T$-graph of the multigraded Hilbert scheme
$\Hilb^h_S$ has vertices these fixed points, and an edge between two
vertices $M$ and $N$ if there is a one-dimensional torus orbit whose
closure contains $M$ and $N$.  This is closely related to the
\emph{graph of monomial ideals} of Altmann and
Sturmfels~\cite{AltmannSturmfels}.

The main result of this paper is a necessary condition
(Theorem~\ref{thm:main}) for two
vertices $M$ and $N$ to be connected by an edge of the $T$-graph.  In
the case of $\Hilb^d(\mathbb A^2)$ we give a combinatorial description
(Theorem~\ref{t:combinatorial}) over $\mathbb Z$ for the equations of
the edge-schemes describing all $T$-orbits joining a pair of fixed
points.

One motivation to study the $T$-graph is to understand the
connectedness of multigraded Hilbert schemes.  In contrast to the
classical Hilbert scheme of subschemes of projective space, which is
always connected \cite{Hartshorne}, multigraded Hilbert schemes can be
disconnected \cite{Santos}.  However, a necessary and sufficient
condition for a multigraded Hilbert scheme to be connected (when the
grading is positive and $\K=\mathbb C$) is for the $T$-graph of
$\Hilb^h_S$ to be connected; see \cite[Corollary
  16]{AltmannSturmfels}.  The sufficiency has been well-exploited in
the literature (see \cite{PeevaStillmanBinomial},
\cite{MaclaganSmithHilbert}), and we hope that through a better
understanding of the $T$-graph the necessity can be used to exhibit
more tractable examples of disconnected multigraded Hilbert schemes.
Another motivation comes from the use of $T$-graphs of varieties to
understand cohomology.  The standard set-up of \cite{GKM} to compute
cohomology from the $T$-graph of a variety requires that the
one-dimensional orbits be isolated, which need not be the case for
multigraded Hilbert schemes.  However one could still hope to deduce
information about the cohomology in these cases; see for example
\cite{BradenChenSottile, EvainChow}.

Monomial ideals are fundamentally combinatorial objects, and a natural
question is whether the $T$-graph has a purely combinatorial
description.  The main results of this paper illustrate the complexity
of this question.  Since the one-dimensional orbits are not isolated,
we consider the edge-scheme $E(M,N)$ parameterizing all ideals $I \in
\Hilb^h_S$ lying in a one-dimension $T$-orbit whose closure contains
$M$ and $N$.  
In Example~\ref{ex:almostall1} we construct an
example of an edge-scheme in $\Hilb^{10}(\mathbb A^2)$ that has
$\mathbb R$-valued points, but no $\mathbb Q$-valued points.  This
shows that the $T$-graph depends on the field $\K$, solving a problem
posed by Altmann and Sturmfels in \cite[Section 5]{AltmannSturmfels}.
It also shows that there cannot be a purely combinatorial description of the
generators of an ideal $I$ contained in a one-dimensional $T$-orbit.

Our first step towards a combinatorial necessary condition for the
existence of an edge in the $T$-graph is to show that we can reduce to
a simpler multigraded Hilbert scheme whose Hilbert function has finite
support,  so $\sum_{a \in A} h(a) < \infty$.  More precisely, we show that if there is an edge in the
$T$-graph of $\Hilb^h_S$ between two monomial ideals $M$ and $N$ then
there exists a positive grading of the polynomial ring $S$ by $\mathbb
Z^n/\mathbb Z\c$ for some $\c \in \mathbb Z^n$, and a Hilbert function
$H : \mathbb Z^n/\mathbb Z \c \rightarrow \mathbb N$ such that $M$ and
$N$ have Hilbert function $H$, and there is an edge in the $T$-graph
between $M$ and $N$ in this refined multigraded Hilbert scheme, which
we denote by $\Hc(H)$.  See Corollary~\ref{c:reductionstep}.

The following theorem, which holds over an arbitrary base,
gives the reduction to finite support Hilbert functions.

\begin{theorem} \label{t:isom}
Let $h : A \rightarrow \mathbb N$ be a Hilbert function.  If the
$A$-grading of $S$ is positive, then there exists $\overline{h} : A
\rightarrow \mathbb N$ with $\sum_{a \in A} \overline{h}(a)<\infty$ and an isomorphism
\begin{equation*}\label{eq:isom}
	\Hilb^h_S \cong \Hilb^{\overline{h}}_S.
\end{equation*}
This isomorphism respects the $T$-action on the two Hilbert schemes.
\end{theorem}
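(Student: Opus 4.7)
The plan is to identify a finite downward-closed subset $D \subset A$ such that the data of an ideal $I$ with Hilbert function $h$ is determined by its graded pieces $(I_a)_{a \in D}$, and then to define $\overline{h}(a) := h(a)$ for $a \in D$ and $\overline{h}(a) := 0$ otherwise. The isomorphism will be given by $I \mapsto I + J$, where $J := \bigoplus_{a \notin D} S_a$ is a $T$-invariant monomial ideal of $S$ (an ideal because $A \setminus D$ is closed under adding each $\deg x_i$, making multiplication by any $x_i$ preserve $J$).

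First I will use positivity of the $A$-grading to fix a group homomorphism $\phi : A \to \mathbb{Z}$ with $\phi(\deg x_i) > 0$ for all $i$; positivity then ensures that $\{a \in A : \phi(a) \leq d,\; S_a \neq 0\}$ is finite for every $d \in \mathbb{N}$. I will invoke the construction of the multigraded Hilbert scheme in \cite{HaimanSturmfels}, which realizes $\Hilb^h_S$ as a closed subscheme of a product of Grassmannians $\prod_{a \in D_0} \mathrm{Gr}(\dim S_a - h(a),\, S_a)$ indexed by a finite downward-closed set $D_0$, cut out by explicit compatibility equations encoding the ideal condition $S_{a'-a} \cdot I_a \subseteq I_{a'}$. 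Taking $D := \{a \in A : \phi(a) \leq d_0\}$ for $d_0$ large enough that $D \supseteq D_0$, every ideal $I$ with Hilbert function $h$ is generated in degrees within $D$, and $D$ is finite and downward-closed. Setting $\overline{h}$ as above yields $\sum_{a \in A} \overline{h}(a) < \infty$.

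The maps $\Phi : I \mapsto I + J$ and $\Psi : \overline{I} \mapsto \langle \overline{I}|_D \rangle$, where the latter is the ideal of $S$ generated by $\bigoplus_{a \in D} \overline{I}_a$, are both well defined on $R$-points for any base ring $R$, functorial in $R$, and visibly $T$-equivariant because $J$ is a monomial ideal. On graded pieces, $(I + J)_a = I_a$ for $a \in D$ and $(I + J)_a = S_a$ for $a \notin D$, so the image of $\Phi$ lies in $\Hilb^{\overline{h}}_S$, and $\overline{I} \supset J$ forces the inverse direction on the level of data supported in $D$.

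The main obstacle is showing that $\Phi$ and $\Psi$ are mutually inverse, which reduces to the assertion that specifying a point of $\Hilb^{\overline{h}}_S$, namely a tuple $(\overline{I}_a)_{a \in D}$ subject only to the compatibility conditions internal to $D$, is equivalent to specifying a point of $\Hilb^h_S$. This follows because the defining equations of both schemes are the same equations on the same Grassmannian product, once $D$ is chosen in the above manner: the compatibility conditions at degrees outside $D$ become formal consequences of those within $D$, and the Hilbert function in degrees outside $D$ is forced by the generation condition. The resulting scheme isomorphism $\Phi$ is defined over $\mathbb{Z}$ by functoriality and respects the $T$-actions on both sides by monomiality of $J$.
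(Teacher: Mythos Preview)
Your approach is essentially the same as the paper's: both reduce to the Haiman--Sturmfels description of $\Hilb^h_S$ as a closed subscheme of a product of Grassmannians indexed by a finite set of degrees, and then argue that $\Hilb^h_S$ and $\Hilb^{\overline h}_S$ are cut out by the same equations in the same product. Your explicit maps $\Phi\colon I\mapsto I+J$ and $\Psi\colon\overline I\mapsto\langle\overline I|_D\rangle$ are a pleasant concrete realization of that abstract isomorphism.

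There is, however, a genuine gap at the step you flag as ``the main obstacle.'' You assert that the defining equations of both schemes agree on $\prod_{a\in D}\mathrm{Gr}(\dim S_a-h(a),S_a)$, but this needs justification on the $h$ side once you have enlarged $D_0$ to the slab $D$. The Haiman--Sturmfels isomorphism $\Hilb^h_S\cong\Hilb^h_{S_{D_0}}$ is only stated for the very supportive set $D_0$ (which, incidentally, need not be downward-closed). For your enlarged $D$ you must check that every compatible family $(L_a)_{a\in D}$ with the correct ranks actually comes from an ideal with Hilbert function $h$; equivalently, that $L_a=\sum_{b\in D_0}S_{a-b}L_b$ for $a\in D\setminus D_0$. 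This is not automatic from the compatibility inclusions alone. Without it, $\Psi(\overline I)=\langle\overline I|_D\rangle$ is not known to land in $\Hilb^h_S$, and $\Phi\circ\Psi$ is not known to be the identity.

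The paper handles the analogous difficulty from the other side: it keeps the original very supportive set $D$ for $h$ and instead proves a separate lemma showing that a suitable enlargement $D'$ is very supportive for $\overline h$; since $\overline h$ vanishes on $D'\setminus D$, the extra Grassmannian factors are points and the two subschemes coincide. Your choice to enlarge to a downward-closed slab makes the $\overline h$ side trivial (as you observe, $J$ is then an ideal), but transfers the nontrivial verification to the $h$ side, and that verification is missing from your write-up.
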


Our combinatorial necessary condition uses the following definition of an arrow map.
It is a modification of the definition of
a ``system of arrows'' introduced by Evain in
\cite{EvainIrreducible} to study incidence conditions for
Bia{\l}ynicki-Birula cells in multigraded Hilbert schemes of
points in the plane; see Remark~\ref{r:incidence}.

\begin{definition}\label{d:arrowmap}
Let $S$ be graded by $\ZZ^n/\ZZ\c$ for some $\c \in \ZZ$, and let
$\prec$ be a monomial term order on $S$. For a monomial ideal $M$, let
$\Mon(M)$ denote the set of monomials in $M$.  If two monomials
$m=x^{u}$ and $m'=x^{v}$ have the
same degree, then $u-v = \ell \mathbf{c}$, and we
define the \emph{distance} between $m$ and $m'$ to be $d(m,m') = |\ell|$. 
For two monomial ideals $M$ and
$N$, we say that $f \colon \Mon(M) \to \Mon(N)$ is an \emph{arrow map}
if
\begin{enumerate}
\item \label{e:bijectivedecreasing}  $f$ is a degree-preserving bijection
	such that 
$m \succeq f(m)$ for all $m\in \Mon(M)$; 
\item  \label{e:mshorter}
for all $m \in \Mon(M)$ and all multiples $m'$ of $m$, we have 
$$d ( m', f(m'))\leq d(m,f(m));$$
\item \label{e:mprimeshorter} for all $m \in \Mon(N)$ and all
  multiples $m'$ of $m$, we have
$$d(f^{-1}(m'),m')\leq d(f^{-1}(m),m).$$
\end{enumerate}
\end{definition}

See Example \ref{e:arrowmaps} and Figure \ref{f:arrowmap1} for an
illustration of this concept.  

\begin{theorem}\label{thm:main}
	Assume that $H \colon \mathbb Z^n/\ZZ \c \to \mathbb N$ has finite
        support, and let $M, N$ be monomial ideals in $\Hc(H)$ that
        are connected by an edge in the $T$-graph of $\Hc(H)$.
	\begin{enumerate}
		\item \label{thm:main1}
	There exists an arrow map $f \colon M \to N$ 
        with respect to 
	some term order $\prec$.
\item \label{thm:main2}
	 Fix
	 $r_1,\dots,r_n$  such that $x_i^{r_i}\in M
	 \cap N$
  for all $1\leq i\leq n$, and let  $Q = \langle x_1^{r_1}, \dots,
  x_n^{r_n}\rangle$.
  Then there also exists an arrow map $\widehat{f} \colon \Mon((Q\colon 
  M)) \to \Mon((Q\colon
  N))$ with respect to the same  term order as in
  \eqref{thm:main1}.
\end{enumerate}
\end{theorem}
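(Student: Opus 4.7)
I would deduce Theorem~\ref{thm:main} directly from the structure of the one-dimensional $T$-orbit $\mathcal{O}$ connecting $M$ and $N$ in $\Hc(H)$. Since $\mathcal{O}$ has dimension one in a Hilbert scheme whose grading is by $\ZZ^n/\ZZ\c$, its stabilizer in $T$ is the codimension-one subtorus corresponding to $\c^\perp\subset\ZZ^n$, and there is a one-parameter subgroup $\lambda\colon \K^*\to T$ in the direction of $\c$ that is transverse to this stabilizer. Replacing $\lambda$ by $\lambda^{-1}$ if necessary, I arrange $\lim_{t\to\infty}\lambda(t)\cdot I = M$ and $\lim_{t\to 0}\lambda(t)\cdot I = N$ for any $I\in\mathcal{O}$. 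Refining the $\lambda$-weight order within each degree class of $\ZZ^n/\ZZ\c$ by any compatible monomial order produces a term order $\prec$ on $S$, and by construction $\inn_\prec(I) = M$ for generic $I\in\mathcal{O}$.

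With this $\prec$ fixed, for each $u\in\Mon(M)$ the ideal $I$ contains a unique reduced Gr\"obner basis element $g_u = x^u + \sum_{\ell<0} a_{u,\ell}\, x^{u+\ell\c}$, whose tail is supported on monomials of degree class $[u]$ that lie outside $\Mon(M)$. I define $f(x^u)$ to be the $\prec$-minimum monomial in the support of $g_u$, so $f(x^u)=x^{u+\ell^*(u)\c}$ for $\ell^*(u)$ the most negative tail exponent (with $f(x^u)=x^u$ if the tail is empty). Rescaling $\lambda(t)\cdot g_u$ as $t\to 0$ exhibits $f(x^u)$ as a monomial of $N$. Degree-preservation and $m\succeq f(m)$ are immediate from the definition, while the bijectivity of $f$ follows by matching the two families of reduced generators obtained in the $t\to\infty$ and $t\to 0$ limits degree-class by degree-class, using the equality of Hilbert functions of $M$ and $N$. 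This establishes Definition~\ref{d:arrowmap}\eqref{e:bijectivedecreasing}.

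The distance conditions~\eqref{e:mshorter} and~\eqref{e:mprimeshorter} form the technical heart of the argument. For~\eqref{e:mshorter}, I begin with the reduction identity $x^w g_u = g_{u+w} + \sum_{u''} c_{u''}\, g_{u''}$, where $u''=u+w+\ell''\c\in\Mon(M)$ ranges over the $M$-monomials appearing in the tail of $x^w g_u$; the one-parameter-family structure pins down each coefficient as $c_{u''}=a_{u,\ell''}\, t^{\ell''\|\c\|^2}$. Expanding this identity and collecting the coefficient of $x^{u+w+L\c}$ for $L<\ell^*(u)$ yields an expression for $a_{u+w,L}$ purely in terms of products $a_{u,\ell''}\, a_{u''',L-\ell''}$ with $\ell''\in[\ell^*(u),0)$ and $u'''=u+w+\ell''\c\in M_{[u+w]}$. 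The key combinatorial input is a ``shrinking'' estimate $|\ell^*(u''')|\le|\ell^*(u)|-|\ell''|$, which I would prove by simultaneous induction on $|\ell^*(u)|$ and $|w|$, using that each such $u'''$ itself arises from a smaller instance of the same reduction process and that within the 1-PS orbit all coefficients scale rigidly with the $\lambda$-weights. Once this estimate is in hand, every product contributing to $a_{u+w,L}$ vanishes, giving $a_{u+w,L}=0$ for $L<\ell^*(u)$, which is precisely~\eqref{e:mshorter}. Condition~\eqref{e:mprimeshorter} then follows symmetrically by running the same argument with $\lambda^{-1}$, which swaps the roles of $M$ and $N$, reverses $\prec$ within each degree class while preserving the distance function, and translates the analysis into one about $f^{-1}$. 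Proving the shrinking estimate cleanly is the main obstacle.

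For Part~\eqref{thm:main2}, I exploit the $T$-invariance of $Q=\langle x_1^{r_1},\dots,x_n^{r_n}\rangle$: since $Q\subset M\cap N$ and $Q\subset I$ is a closed condition on $\Hc(H)$, we have $Q\subset I$ for every $I\in\overline{\mathcal{O}}$. The colon construction $I\mapsto(Q\colon I)$ is $T$-equivariant, $\lambda(t)\cdot(Q\colon I)=(Q\colon\lambda(t)\cdot I)$, so $\{(Q\colon I):I\in\mathcal{O}\}$ is again a one-dimensional $T$-orbit, lying in a multigraded Hilbert scheme $\Hc(\overline H)$ whose closure contains $(Q\colon M)$ and $(Q\colon N)$. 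The same $\lambda$ drives this dual orbit in the same direction, so the identical term order $\prec$ refines its $\lambda$-weight order and realises $(Q\colon M)$ as $\inn_\prec((Q\colon I))$. Applying Part~\eqref{thm:main1} to $((Q\colon M),(Q\colon N))$ with this same $\prec$ produces the desired arrow map $\widehat f$.
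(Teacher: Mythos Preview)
Your construction of $f$ in Part~\eqref{thm:main1} has a genuine gap: taking $f(x^u)$ to be the $\prec$-minimum monomial of the \emph{reduced} element $g_u$ does not give a bijection. Consider the paper's own Example~\ref{ex:almostall1} with $M=\inn_\prec I=\langle y^2,x^5\rangle$, $N=\langle y^5,x^2\rangle$, $\c=(1,-1)$, $I=\langle y^2+axy+bx^2,\ x^5\rangle$ on the edge-scheme. In degree~$5$ the only standard monomial of $M$ is $x^4y$, so every $g_u$ with $x^u\in\Mon(M)_5\setminus I$ has tail a nonzero multiple of $x^4y$; explicitly $g_{x^3y^2}=x^3y^2+ax^4y$ and $g_{x^2y^3}=x^2y^3+(b-a^2)x^4y$, with both coefficients nonzero on $E(M,N)$. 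Your map therefore sends $x^3y^2$, $x^2y^3$, and $xy^4$ all to $x^4y$, so it is not injective, and the vague ``matching the two families of reduced generators'' cannot be made to work.

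The paper's construction (Proposition~\ref{thm:arrowmapnecessary}) is different in exactly the place that matters: one sets
\[
f(m)=\max_{\prec}\bigl\{\inn_{\prec^{\opp}}(p)\ :\ p\in I\text{ homogeneous},\ \inn_\prec(p)=m\bigr\},
\]
maximising over \emph{all} such $p$, not just the reduced element. Injectivity is then a one-line subtraction: if $f(m)=f(m')$ with $m\succ m'$, a suitable combination $q$ of $p_m$ and $p_{m'}$ has $\inn_\prec q=m$ and $\inn_{\prec^{\opp}}q\succ f(m)$, contradicting maximality. With this definition, condition~\eqref{e:mshorter} is immediate from $m''p_m\in I$, and~\eqref{e:mprimeshorter} is a short induction; your ``shrinking estimate'' and the accompanying inductive machinery are unnecessary, and indeed were aimed at properties of a map that is not well-defined.

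For Part~\eqref{thm:main2} your strategy---pass to $(Q:I)$ and reapply Part~\eqref{thm:main1}---is the same as the paper's, but two steps are not justified. First, ``$Q\subset I$ is a closed condition'' does not help: $M$ and $N$ lie in the \emph{boundary} of the orbit, so containment there does not propagate inward. The paper instead argues directly that $x_i^{r_i}\in I$ from $x_i^{r_i}\in M\cap N$ by choosing the appropriate one of $\prec,\prec^{\opp}$. Second, you assert that the closure of the orbit of $(Q:I)$ contains $(Q:M)$ and $(Q:N)$, i.e.\ that $\inn_\prec(Q:I)=(Q:\inn_\prec I)$; this is the substantive point, and the paper proves it via a Gorenstein-duality dimension count using that $S/Q$ is a zero-dimensional complete intersection, which your outline omits.
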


Condition \eqref{thm:main1} holds without the condition that the
Hilbert function has finite support; see Corollary~\ref{cor:main}.  It is not
sufficient for the existence of an edge in the $T$-graph; see Example
\ref{ex:all}.  We do not know, however,  of an example showing that both
\eqref{thm:main1} and \eqref{thm:main2} together do not suffice to
guarantee the existence of an edge.  For the Hilbert scheme of $d$
points in the plane, these conditions are sufficient for $d\leq 16$;
see Table~\ref{table}.  On the other hand, the proof of
Theorem~\ref{thm:main} is based on associating an arrow map to an
ideal $I$ in the $T$-orbit  (Proposition \ref{thm:arrowmapnecessary}), and
we have examples of arrow maps that are not associated to ideals
(Example \ref{ex:almostall}).

In the case of the Hilbert scheme of points
in the plane, there exists an explicit combinatorial description
of the equations 
for the edge-scheme describing 
all one-dimensional $T$-orbits joining a
fixed pair of monomial ideals. In particular, 
this scheme is defined over $\ZZ$.

\begin{theorem} \label{t:combinatorial}
Let $M,N$ be monomial ideals in $\K[x,y]$ with the same Hilbert
function with respect to a positive $\mathbb Z^2/\mathbb
Z\mathbf{c}$-grading.  The ideal of the edge-scheme $E(M,N)$ is
generated by polynomials $F_{n,s}$ with integer coefficients, where
$n$ is a minimal generator of $N$, and $s$ is a standard monomial of
$M$ with $\deg(s)=\deg(n)$. 
\end{theorem}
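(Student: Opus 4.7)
The plan is to give an explicit affine chart on the edge-scheme $E(M,N)$ using the coefficients in normal-form expansions of the minimal generators of $N$, and then to extract the defining equations of $E(M,N)$ inside this chart as combinatorial $S$-pair reductions that happen to involve only integer coefficients.

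First I would invoke Theorem~\ref{t:isom} to reduce to a finite-support Hilbert function on $\Hc(H)$, so that only finitely many monomials are relevant in each graded piece. Any $I$ on a one-dimensional $T$-orbit joining $M$ and $N$ is $\ZZ^2/\ZZ\c$-homogeneous with Hilbert function $H$, and by the $\ZZ^2/\ZZ\c$-grading together with finite support there are only finitely many monomials of any given $A$-degree. The key parameterization is the following: for each minimal generator $n$ of $N$, there is a unique element of $I$ of the form
\[
\tilde n \;=\; n \;+\; \sum_{s} c_{n,s}\, s,
\]
where $s$ runs over the standard monomials of $M$ with $\deg_A(s)=\deg_A(n)$ and $s\neq n$. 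These $c_{n,s}$ form coordinates on an affine space containing a chart on $E(M,N)$.

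Next, I would write down the ideal-closure conditions. For $I$ to be an ideal, $x\tilde n$ and $y\tilde n$ must lie in $I$ for each minimal generator $n$ of $N$. In $\K[x,y]$ this reduces to $S$-pair conditions between pairs of the generators $\tilde n$: each $S$-pair is a syzygy of two monomials differing by a single power of $x$ or $y$, so the reduction steps are subtractions with coefficient $\pm 1$ and multiplications by a single variable. Tracking the reduction, the final normal form of each $S$-pair is a $\K$-linear combination $\sum_{s} F_{n,s}\, s$ of standard monomials of $M$, where $F_{n,s}$ is a polynomial in the variables $c_{n',s'}$ with integer coefficients. The required ideal conditions are precisely $F_{n,s}=0$. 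The degeneration to $M$ is built into the form of $\tilde n$ (the initial ideal with respect to $\prec$ is $M$), and the degeneration to $N$ follows because, after renormalizing by the $T$-parameter, the opposite initial term of each $\tilde n$ is $n$, so the opposite limit is $N$; this produces no additional equations beyond the $F_{n,s}$.

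The main obstacle will be the bookkeeping in the middle step: one must verify that the $S$-pair reduction in $\K[x,y]$ between $x\tilde{n}_1$ and $y\tilde{n}_2$ (or their analogues) can always be carried out so that every intermediate coefficient is an integer linear combination of the $c_{n',s'}$, and then that exactly one polynomial $F_{n,s}$ is produced for each pair of a minimal generator $n$ of $N$ and a standard monomial $s$ of $M$ of matching degree. The rigidity of the staircase combinatorics in dimension two is crucial here: each graded piece of $S/M$ has a single-file structure along the direction of $\c$, so the reductions proceed monomial by monomial without any denominators appearing, giving the desired integer polynomials $F_{n,s}$ and identifying the resulting affine scheme with $E(M,N)$.
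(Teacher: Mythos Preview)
Your parameterization has a genuine gap. You claim that for each minimal generator $n$ of $N$ there is a unique element $\tilde n = n + \sum_s c_{n,s}\, s \in I$ with $s$ ranging over standard monomials of $M$, and that the $c_{n,s}$ serve as affine coordinates. But any nonzero element of $I$ must have its $\prec$-initial term in $M$; if every monomial appearing in $\tilde n$ is either $n$ or a standard monomial of $M$, then $\tilde n\in I$ forces $n\in M$. A minimal generator of $N$ need not lie in $M$. Concretely, with $\deg x=\deg y=1$, $M=\langle x^3,xy,y^2\rangle$, $N=\langle x^2,y^2\rangle$ (same Hilbert function, and $M>N$ for lex with $x\prec y$), the generator $n=x^2$ of $N$ is the unique standard monomial of $M$ in degree~$2$, so your prescription gives $\tilde n=x^2$, which lies in no $I$ on the edge (for instance $I=\langle y^2,\,xy+ax^2,\,x^3\rangle$; the element of $I$ with $\prec^{\opp}$-leading term $x^2$ is $x^2+a^{-1}xy$, and $xy\in M$). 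More generally, the reduced $\prec^{\opp}$-Gr\"obner basis element with leading term $n$ has its remaining terms among the standard monomials of $N$, not of $M$; there is no single chart of the shape you describe.

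The paper's proof works instead with \emph{two} charts simultaneously. Evain's theorem supplies explicit isomorphisms $C_\prec(M)\cong\mathbb A^{|T_+(M)|}$ and $C_{\prec^{\opp}}(N)\cong\mathbb A^{|T_-(N)|}$, parameterized by the positive significant arrows of $M$ and the negative significant arrows of $N$, together with explicit universal Gr\"obner bases whose nonleading coefficients are integer-coefficient polynomials in these arrow variables. The edge scheme is then cut out inside the product $\mathbb A^{|T_+(M)|}\times\mathbb A^{|T_-(N)|}$ by the condition that the two universal ideals agree, and the equations $F_{n,s}$ are obtained by reducing the $\prec^{\opp}$-generators for $N$ modulo the reduced $\prec$-Gr\"obner basis for $M$. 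The integrality of the $F_{n,s}$ is not simply a matter of ``coefficient $\pm 1$'' reductions: it is extracted from the combinatorics of paths, walks, strolls, and hikes built into Evain's coordinates, and the resulting integer coefficients count multiplicities of hikes, not just signs. Without Evain's parameterization (or an equivalent replacement), you have neither a valid chart on $E(M,N)$ nor a mechanism that produces the $F_{n,s}$.
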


The terms of the polynomials $F_{n,s}$ have an explicit
combinatorial form in terms of the torus weights of the 
action of the torus on the tangent spaces to $M$ and $N$ 
in $\Hc(H)$ that we describe in detail in Section
\ref{s:utahequations}.  The equations are obtained by combining an algorithm
of Altmann and Sturmfels \cite[Algorithm 5]{AltmannSturmfels} with a
description, due to Evain~\cite{EvainIrreducible}, of the
Bia{\l}ynicki-Birula cells in this Hilbert scheme.

This paper is partially experimental in nature, and we relied heavily
on computations using the computer algebra system Macaulay 2
\cite{M2}.  The resulting code is available from the second author's 
webpage
as the Macaulay 2 package {\tt TEdges} \cite{TEdges}.  Some details of
these computations are given in the last section of the paper.

{\bf Acknowledgments}
We thank Bernd Sturmfels for stimulating our interest in the  
$T$-graph.  This paper was written at several
mathematical institutes and we are grateful to the Institute of
Mathematics and its Applications, the Mathematical 
Sciences Research Institute, the Mathematisches
Forschungsinstitut Oberwolfach, and the Mittag Leffler institute for
their hospitality.  The first author was partially supported by an
Oberwolfach Leibniz Fellowship and NSF grant DMS 1001859.   The second author was partially supported by EPSRC grant EP/I008071/1.

\section{Reduction to the positively graded artinian case}

\label{s:MHS}

In this section we show that the study of the $T$-graph of arbitrary
multigraded Hilbert schemes can be reduced to the study of multigraded
Hilbert schemes parameterizing finite-length ideals that are homogeneous
with respect to a positive grading by $\ZZ^n/\ZZ\c$, where $\c \in
\ZZ^n$.  Moreover, we show that every positively-graded multigraded
Hilbert scheme is isomorphic to some multigraded Hilbert scheme
parameterizing finite-length ideals.  Throughout this section $S$ denotes
the polynomial ring $S=\K[x_1,\dots,x_n]$, where unless otherwise
noted $\K$ is a field.

\subsection{Multigraded Hilbert schemes}\label{s:positivereduction}

\begin{definition}[\cite{HaimanSturmfels}]
Fix a grading by an abelian group $A$ on $S$, and a function $h : A
\rightarrow \mathbb N$.  The multigraded Hilbert scheme $\Hilb^h_S$
parameterizes all homogeneous ideals $I$ in $S$ with Hilbert function
$\dim_{\K} (S/I)_a = h(a)$ for all $a \in A$.
\end{definition}

The multigraded Hilbert scheme $\Hilb^h_S$ is a quasiprojective scheme
over $\Spec(\K)$; see \cite[Theorem 1.1]{HaimanSturmfels}. 
Following \cite{HaimanSturmfels}, we say that the grading 
is {\em positive} 
if $\dim_{\K} S_a < \infty$ for all $a \in
A$. In this case  $\Hilb^h_S$ is projective; see \cite[Corollary
  1.2]{HaimanSturmfels}.  

\begin{example}
\begin{enumerate}
	\item  Fix an integer-valued polynomial $P$. 
		There exists $D\gg0$ such that when 
  $A=\mathbb Z$, $h(a)=0$ for $a<0$, $h(a)=\dim_{\K} S_a$ for $0
  \leq a <D$, and $h(a) =P(a)$ for $a \geq D$, then $\Hilb^h_S$ is
  Grothendieck's Hilbert scheme $\Hilb^P(\mathbb P^{n-1})$
  parameterizing all subschemes of $\mathbb P^{n-1}$ with Hilbert
  polynomial $P$ \cite[\S 4]{HaimanSturmfels}.

\item When $A=0$, and $h(0)=d$,  
then $\Hilb^h_S$ is the Hilbert scheme
  $\Hilb^d(\mathbb A^n)$ of $d$ points in $\mathbb A^n$.

\item For arbitrary $A$, if $h(a)=1$ whenever $\dim_{\K}S_a >0$ and
  $h(a)=0$ otherwise, then $\Hilb^h_S$ is the toric Hilbert scheme
  \cite{PeevaStillman}, \cite[\S 5]{HaimanSturmfels}.  When $A$
  is finite this is Nakamura's $G$-Hilbert scheme~\cite{Nakamura}.

\end{enumerate}

\end{example}

\subsection{Background on the $T$-graph of a multigraded 
Hilbert scheme}

The action of the torus $T =(\K^*)^n$ on $\mathbb A^n$ induces an 
action on $\Hilb^h_S$ whose fixed points 
are the monomial ideals contained in $\Hilb^h_S$.

\begin{definition}
The $T$-graph of $\Hilb^h_S$ has vertices the monomial ideals in
$\Hilb^h_S$.  There is an edge joining two monomial ideals $M, N \in
\Hilb^h_S$ if there is $I \in \Hilb^h_S$ such that the $T$-orbit of
$I$ contains $M$ and $N$ in its closure and is one-dimensional.
\end{definition}

The $T$-graph has an interpretation in terms of Gr\"obner theory,
which we now explain.  For basic facts about Gr\"obner bases and
initial ideals, see \cite{CoxLittleOshea}.  For the geometric
interpretation of initial ideals as limits of one-parameter torus orbits see
\cite[Chapter 15.8]{Eisenbud}.

For $\c=(c_1, \ldots, c_n) \in \ZZ^n$, we define a
$(\ZZ^n/\ZZ\c)$-grading on $S=\K[x_1, \ldots, x_n]$ by letting
$\deg(x_i) = \mathbf{e_i} + \ZZ\c$, where $\{\mathbf{e_1}, \ldots,
\mathbf{e_n}\}$ is the standard basis of $\ZZ^n$. 
Let $\mathbf{c}^+ = \sum_{c_i>0} c_i \mathbf{e}_i$ and
$\mathbf{c}^- = \sum_{c_i<0} -c_i \mathbf{e}_i$, so
$\mathbf{c}=\mathbf{c}^+-\mathbf{c}^-$. The grading induced by $\c$ is
positive if and only if $\c^+\neq 0$ and $\c^- \neq 0$.

We next note that any non-monomial ideal $I$ that is homogeneous with
respect to this $\ZZ^n/\ZZ\c$-grading has either exactly two initial
ideals, if the grading is positive, or exactly one
initial ideal otherwise.  Indeed, a homogeneous polynomial has the
form $f=\sum_{i=0}^s a_i x^{u+i\mathbf{c}}$, where
we assume $a_0,a_s \neq 0$. The initial term is
$\inn_{\prec}(f)=a_0x^{u}$ if
$x^{\mathbf{c}^+} \prec x^{\mathbf{c}^-}$ and
$\inn_{\prec}(f)=a_s x^{\mathbf{u}+s\mathbf{c}}$ if
$x^{\mathbf{c}^+} \succ x^{\mathbf{c}^-}$.  Thus the
initial ideal of $I$ with respect to a term order $\prec$ only depends
on whether $x^{\mathbf{c}^+} \prec x^{\mathbf{c}^-}$
or $x^{\mathbf{c}^+} \succ x^{\mathbf{c}^-}$.  If
the $\ZZ^n/\c\ZZ$-grading is positive, then both $\c^+$ and $\c^-$ are
nonzero, so a non-monomial ideal has exactly two monomial initial
ideals.  However, if the grading is not positive, so without loss of
generality $\c \geq 0$, the monomials of degree $a$ have the form
$x^{u+i \c}$ for some $u \in \mathbb N^n$
and $i \geq 0$, and the standard monomials of any monomial initial
ideal in degree $a$ are $x^{u+i\c}$ for $ 0 \leq i <
\dim_{\K}(S/I)_a$.  Thus for every Hilbert function $H$ there exists
exactly one monomial ideal with this Hilbert function. In particular,
a homogeneous polynomial has exactly one initial ideal in this case.

\begin{definition}\label{d:opp}
Assume that the $\ZZ^n/\ZZ\c$-grading on $S$ is positive. 
For a term order $\prec$ with $x^{\c^+} \prec x^{\c^-}$ (resp. $x^{\c^+}
\succ x^{\c^-}$) we let $\prec^{\opp}$ be any term order with $x^{\c^+}
\succ x^{\c^-}$ (resp. $x^{\c^+} \prec x^{\c^-}$).  
\end{definition}

\begin{proposition}
	\label{p:torusgrading}
	Let $M,N$ be monomial ideals in $S$. 
	 There exists a one-dimensional torus orbit
         $\mathcal{O}\subset \Hilb^h_S$ such that
         $\overline{\mathcal{O}} = \mathcal{O} \cup \{M,N\}$ if and
         only if there exists $\c = (c_1, \ldots, 
	 c_n)\in \ZZ^n$ with $\c^+ \neq 0$, 
	 $\c^- \neq 0$, a term order $\prec$, and
         an ideal $I$ homogeneous with respect to the
         $\ZZ^n/\ZZ\c$-grading such that $\inn_{\prec}I =M$ and
         $\inn_{\prec^{\opp}}I=N$.  
The vector $\c$ can be chosen so that the $\mathbb Z^n/ \mathbb
Z \c $-grading of $S$ refines the grading on $S$.
 \end{proposition}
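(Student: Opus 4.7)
The plan is to invoke the standard dictionary between one-dimensional torus orbits in $\Hilb^h_S$, one-parameter subgroups of $T$, and Gr\"obner degenerations. The enabling observation is that the subtorus $T_A := \Spec \K[A] \subset T$ acts trivially on $\Hilb^h_S$: an element $t \in T_A$ acts on each $A$-graded piece $S_a$ by the scalar $t(a)$, hence preserves every $A$-homogeneous subspace of $S$. This observation supplies the refinement conclusion essentially for free.

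For the forward implication, fix any $I \in \mathcal{O}$. Because $\mathcal{O}$ is one-dimensional, the identity component of $\Stab_T(I)$ has the form $T_0 = \ker \chi_\c$ for some primitive character $\c \in \ZZ^n = X(T)$. The inclusion $T_A \subseteq T_0$ translates, via the restriction of characters $X(T) \to X(T_A) = A$ --- which is exactly the $A$-grading map $\phi \colon \ZZ^n \to A$ --- into the condition $\c \in \ker \phi$. This is precisely the statement that the $\ZZ^n/\ZZ\c$-grading refines the $A$-grading. Meanwhile, $T_0$-invariance of $I$ is equivalent, by weight-space decomposition, to $I$ being $\ZZ^n/\ZZ\c$-homogeneous. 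Now apply the cocharacter $\lambda_\c(t) := (t^{c_1}, \ldots, t^{c_n})$; since $\chi_\c(\lambda_\c(t)) = t^{c_1^2 + \cdots + c_n^2}$ is nontrivial, $\lambda_\c(\K^*) \not\subseteq T_0$ and $\lambda_\c(\K^*) \cdot I = \mathcal{O}$ set-theoretically. Its two limits at $t \to 0$ and $t \to \infty$ sit in $\overline{\mathcal{O}} \setminus \mathcal{O} = \{M, N\}$ and, by the initial-term computation in the paragraph preceding Definition \ref{d:opp}, equal $\inn_\prec I$ and $\inn_{\prec^{\opp}} I$ for any term order $\prec$ with $x^{\c^+} \prec x^{\c^-}$. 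Since $M \neq N$, the same paragraph forces $\c^+ \neq 0$ and $\c^- \neq 0$.

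For the reverse implication the dictionary runs backwards: given $(\c, \prec, I)$ as in the statement, $M \neq N$ forces $I$ to be non-monomial, so $\lambda_\c$ acts non-trivially and $\mathcal{O} := \lambda_\c(\K^*) \cdot I \subset \Hilb^h_S$ is a one-dimensional torus orbit. The same Gr\"obner-limit identification yields $\lim_{t \to 0} \lambda_\c(t) \cdot I = M$ and $\lim_{t \to \infty} \lambda_\c(t) \cdot I = N$, both lying in $\overline{\mathcal{O}}$, so $\overline{\mathcal{O}} = \mathcal{O} \cup \{M, N\}$. The only delicate bookkeeping is aligning characters with cocharacters and aligning the sign convention $x^{\c^+} \prec x^{\c^-}$ with the $t \to 0$ limit; the substantive content is the duality $T_A \subseteq \ker \chi_\c \iff \c \in \ker \phi$, which delivers the refinement statement without further work.
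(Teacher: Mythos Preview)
Your overall strategy matches the paper's: both translate between one-dimensional orbits, codimension-one stabilizers, and $\ZZ^n/\ZZ\c$-homogeneity, and both identify the two boundary points with initial ideals via a one-parameter subgroup. Your argument for the refinement clause via the trivially-acting subgroup $T_A = \Spec \K[A]$ is conceptually cleaner than the paper's explicit manipulation of generators.

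There is, however, a genuine gap in that refinement step. You assert $T_A \subseteq T_0$, the \emph{identity component} of $\Stab_T(I)$. What the ``$T_A$ acts trivially'' observation actually gives is $T_A \subseteq \Stab_T(I)$; passing to the identity component requires $T_A$ itself to be connected, which fails whenever $A$ has torsion. A concrete counterexample: take $n=2$, $A = \ZZ/2\ZZ$ with $\deg(x_1)=0$, $\deg(x_2)=1$, and $I = \langle x_2^2 - x_1^2 \rangle$. The primitive vector is $\c = (1,-1)$ and $T_0 = \ker\chi_\c = \{t_1 = t_2\}$, but $T_A = \{(1,\pm 1)\} \not\subseteq T_0$; correspondingly $\c \notin L = \ker\phi$ while $2\c \in L$. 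So with your choice of primitive $\c$ the $\ZZ^n/\ZZ\c$-grading does \emph{not} refine the $A$-grading.

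The fix is immediate once you see the issue: drop the requirement that $\c$ be primitive and instead take $\c$ so that $\ZZ\c$ is the full lattice of characters vanishing on $\Stab_T(I)$ (equivalently $\Stab_T(I) = \ker\chi_\c$, not just its identity component). Then $T_A \subseteq \Stab_T(I)$ translates directly into $\c \in \ker\phi$, and the rest of your argument goes through unchanged since $I$ is still $\ZZ^n/\ZZ\c$-homogeneous and $\lambda_\c$ still pairs nontrivially with $\c$. This is exactly the torus-theoretic version of what the paper does by hand when it replaces its initial primitive $\c$ by $j\c$, with $j$ the gcd of the exponent differences appearing in a bihomogeneous generating set.
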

\begin{proof}  

 First note that an ideal $I$ is contained in a one-dimensional torus
 orbit if and only if $I$ is fixed by a codimension-one subtorus $T'$.
 If $T'$ is a codimension-one subtorus of $T$ and $\c$ is the
 generator of the subgroup of $M:=\Hom(T,\K^*)\cong \ZZ^n$ that is the
 image of the inclusion $\Hom(T/T', \K^*) \cong \ZZ \hookrightarrow
 \Hom(T,\K^*)$, then $I$ is fixed by $T'$ if and only if $I$ is
 homogeneous with respect to the induced $\mathbb Z^n/\c \mathbb
 Z$-grading; see \cite[Lemma 10.3]{MillerSturmfels}.

For $I \in \Hilb^h_S$ lying in a one-dimensional torus orbit, $I$ is
also homogeneous with respect to the $A$-grading on $S$.  Write $A =
\mathbb Z^n/L$ for some lattice $L$; two monomials $x^u$ and $x^v$ have
the same degree with respect to the $A$-grading if and only if $u-v \in
L$.  Choose a generating set for $I$ that is homogeneous with respect to
both the $\mathbb Z^n/\mathbb Z \mathbf{c}$ and $A$ gradings, with the property
that no summand of any generator lives in $I$.  Such generators have the
form $\sum a_i x^{u + i \mathbf{c}}$, where $i\mathbf{c} \in L$.  If $j$
the greatest common divisor of all differences $i-i'$ with $a_i, a_{i'}
\neq 0$, then $I$ is homogeneous with respect to the $\mathbb Z^n/\mathbb
Z j\mathbf{c}$-grading, and $j \mathbf{c} \in L$.  Thus after replacing
$\mathbf{c}$ by $j\mathbf{c}$ the grading of $S$ by $\mathbb Z^n/\mathbb Z
\c$ refines the existing grading in the sense of
\cite[p729]{HaimanSturmfels}.  It thus remains to check that $M$ and $N$ are
the two initial ideals of $I$.

Let $H$ be the Hilbert function of $I$ with respect to the $\mathbb
Z^n/\c \mathbb Z$-grading.  The inclusion of $\Hilb^H_S$ into
$\Hilb^h_S$ as a closed subscheme (\cite[Proposition
  1.5]{HaimanSturmfels}) means that $M$ and $N$ also have Hilbert
function $H$ with respect to the $\mathbb Z^n/\c \mathbb Z$-grading.
This means that this grading is positive, as 
otherwise there would be only one monomial ideal with Hilbert function
$H$.  Thus $I$ has two initial ideals, so it remains to observe that
all initial ideals of $I$ are contained in the closure of the
$T$-orbit of $I$.

Let $\mathbf{w}\in \Hom(M,\ZZ)$ and let $\lambda_{\mathbf{w}}
\hookrightarrow T$ be the one-parameter subgroup associated to
$\mathbf{w}$.  The composition of the inclusion of the one-parameter
subgroup $\lambda_{\mathbf{w}}$ into $T$ and the projection $T\to
T/T'$ is an isomorphism if and only if $\langle \mathbf{ w},
\mathbf{c}\rangle \neq 0$. So if $I$ is an ideal that is fixed by $T'$
and $\mathbf{w}$ satisfies $\langle \mathbf{w}, \mathbf{c} \rangle
\neq 0$, then the orbits $T\cdot I$ and $\lambda_{\mathbf{w}}\cdot I$
are equal.  In particular, their closures in $\Hilb^h_S$ agree.  The
claim now follows from the interpretation of initial ideals as flat
limits of one-parameter torus orbits, as the two points $M$ and $N$ in
the closure of the $T$-orbit of $I$ must be $\inn_{\mathbf{w}}(I)$ and
$\inn_{\mathbf{-w}}(I)$ in the notation of \cite[Chapter
  15.8]{Eisenbud}. Since $\inn_{\mathbf{w}}(I)$ and
$\inn_{-\mathbf{w}}(I)$ are distinct monomial ideals, they equal
$\inn_{\prec}(I)$ and $\inn_{\prec^{\opp}}(I)$ for some term order
$\prec$.
\end{proof}

It follows that in 
order to study one-dimensional torus orbits in any 
multigraded Hilbert scheme $\Hilb^h_S$, it 
suffices to study multigraded Hilbert schemes 
with grading group $\ZZ^n/\ZZ\c$ and Hilbert function 
$H \colon \ZZ^n/\ZZ\c \to \NN$. We denote 
the corresponding multigraded Hilbert 
scheme by $\Hc(H)$. 

 \begin{corollary} \label{c:reductionstep}
	 Let $M,N$ be monomial ideals in $\Hilb^h_S$.  Then 
         $M$ and $N$ are connected by an edge in the $T$-graph
         if and only if
         there exists $\c\in \ZZ^n$ and $H\colon \ZZ^n/\ZZ\c \to \NN$
         such that $M, N \in \Hc(H)$ and there is an edge between $M$ and $N$ in the
         $T$-graph of $\Hc(H)$.
 \end{corollary}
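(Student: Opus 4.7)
The strategy is to derive both directions of the stated equivalence from Proposition~\ref{p:torusgrading}, using in a crucial way the flexibility in its final sentence: the vector $\c$ can always be chosen so that the $\ZZ^n/\ZZ\c$-grading of $S$ refines the $A$-grading.

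For the forward implication, an edge in the $T$-graph of $\Hilb^h_S$ between $M$ and $N$ yields, via Proposition~\ref{p:torusgrading}, a vector $\c \in \ZZ^n$ with $\c^+,\c^- \neq 0$ chosen so that $\ZZ^n/\ZZ\c$ refines $A$, a term order $\prec$, and an ideal $I \in \Hilb^h_S$ that is $\ZZ^n/\ZZ\c$-homogeneous with $\inn_{\prec}(I)=M$ and $\inn_{\prec^{\opp}}(I)=N$. Taking $H$ to be the $\ZZ^n/\ZZ\c$-graded Hilbert function of $M$, and using that $N$ and $I$ are Gr\"obner degenerations compatible with the $\ZZ^n/\ZZ\c$-grading, all three ideals share Hilbert function $H$ and hence lie in $\Hc(H)$. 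The one-parameter subgroup argument at the end of the proof of Proposition~\ref{p:torusgrading} then shows that the $T$-orbit of $I$ inside $\Hc(H)$ is one-dimensional with $M$ and $N$ as its limits, supplying the edge in the $T$-graph of $\Hc(H)$.

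For the backward implication, assume we are given $\c$ and $H$ with $M, N \in \Hc(H)$ and an edge in the $T$-graph of $\Hc(H)$. I apply Proposition~\ref{p:torusgrading} to $\Hc(H)$ to produce an ideal $I$, a vector $\c'$, and a term order $\prec$ with $\inn_{\prec}(I)=M$ and $\inn_{\prec^{\opp}}(I)=N$, where the refinement clause of the proposition allows me to arrange that $\ZZ^n/\ZZ\c'$ refines the $A$-grading of $S$ (this amounts to replacing $\c'$ by a suitable multiple lying in the kernel lattice of the $A$-grading, exactly as in the body of the proof of Proposition~\ref{p:torusgrading}). Since the $\ZZ^n/\ZZ\c'$-grading refines $A$, the ideal $I$ is automatically $A$-homogeneous, and preservation of Hilbert functions under Gr\"obner degeneration forces the $A$-graded Hilbert function of $I$ to equal that of $M$, namely $h$. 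Hence $I \in \Hilb^h_S$, and its one-dimensional $T$-orbit provides the desired edge in the $T$-graph of $\Hilb^h_S$.

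The main obstacle is in the backward direction, where one must confirm that the ideal $I$ produced from the edge in $\Hc(H)$ actually lies in $\Hilb^h_S$, equivalently, that it is $A$-homogeneous with Hilbert function $h$. This is precisely the role of the refinement clause of Proposition~\ref{p:torusgrading}: it lets us pass from the given $\c$ to an integer multiple whose lattice is contained in the kernel of the $A$-grading, thereby bridging the $\ZZ^n/\ZZ\c$-grading of $\Hc(H)$ and the $A$-grading of $\Hilb^h_S$, after which the Hilbert-function bookkeeping is a formal check.
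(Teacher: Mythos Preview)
Your forward direction is correct and is essentially what the paper has in mind: Proposition~\ref{p:torusgrading} produces $\c$ with $\ZZ^n/\ZZ\c$ refining $A$, and then $M$, $N$, and $I$ all lie in $\Hc(H)$ for $H$ the $\ZZ^n/\ZZ\c$-graded Hilbert function.

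Your backward direction has a circularity. When you apply Proposition~\ref{p:torusgrading} to $\Hc(H)$, the ``grading on $S$'' in that context is the $\ZZ^n/\ZZ\c$-grading, so the refinement clause only tells you that $\ZZ^n/\ZZ\c'$ refines $\ZZ^n/\ZZ\c$ (i.e.\ $\c'$ is a multiple of $\c$), not that it refines the $A$-grading. You try to repair this by invoking the argument ``in the body of the proof'' of the proposition, but look at what that argument actually uses: it begins by choosing generators of $I$ that are homogeneous with respect to \emph{both} the $\ZZ^n/\ZZ\c$-grading \emph{and} the $A$-grading. That second homogeneity is exactly the statement $I\in\Hilb^h_S$ you are trying to establish, so the appeal is circular. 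Concretely, the ideal $I$ witnessing the edge in $\Hc(H)$ need not be $A$-homogeneous at all: already for $\Hilb^4(\AA^2)$ with the $\ZZ/2$-grading $\deg(x)=0$, $\deg(y)=1$, the generic ideal $\langle x^3, xy-ax^2, y^2-(a^2+b)x^2\rangle$ on the edge between $\langle x^3,xy,y^2\rangle$ and $\langle x^2,xy,y^3\rangle$ is not $\ZZ/2$-homogeneous when $a\neq 0$, and no multiple of $\c=(1,-1)$ will fix this for that particular $I$.

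The clean way out, and what the paper implicitly intends, is to observe that the forward direction already produces $\c$ with $\ZZ\c\subseteq L$, so one may harmlessly restrict the existential in the corollary to such $\c$. For these, $\Hc(H)$ embeds as a closed subscheme of $\Hilb^h_S$ by \cite[Proposition~1.5]{HaimanSturmfels} (as noted inside the proof of Proposition~\ref{p:torusgrading}), and any one-dimensional $T$-orbit in $\Hc(H)$ is then literally a one-dimensional $T$-orbit in $\Hilb^h_S$. This makes the backward direction immediate without ever needing to massage $\c'$.
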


Note that every $I \in \Hc(H)$ is either a monomial ideal, or lies in a one-dimensional torus orbit.  

\begin{remark}\label{r:tangentweights}
	For a given monomial ideal $M \in \Hilb^h_S$, there are only
        finitely many $\c$ such that $M$ is contained in a
        positive-dimensional $\Hc(H)$ for some Hilbert function
        $H$. These vectors $\c$ are the weights of the torus
        action on the tangent space to $\Hilb^h_S$ at $M$.
\end{remark}

     Recall the
definition of an arrow map (Definition \ref{d:arrowmap}).

\begin{definition} \label{d:posetorder}
       We define a partial order on the monomial ideals in 
       $\Hc(H)$ by letting $M>N$ if there exists a 
       map $f \colon \Mon(M) \to \Mon(N)$ satisfying 
       condition
       \eqref{e:bijectivedecreasing}
       of Definition \ref{d:arrowmap}.
\end{definition}

This partial order was used in Yam\'eogo \cite{Yameogo1, Yameogo2} and
Evain \cite{EvainSchubert} to study a related incidence
question. See Remark~\ref{r:incidence} for a more detailed discussion.

\begin{definition} \label{d:EMN}
For $\c \in \ZZ^n$, $H$ a Hilbert function, 
and a fixed term order $\prec$, let 
$$C_{\prec}(M) = \{ I \in \Hc(H) \mid \inn_{\prec} I = M\}.$$ This is
naturally a subscheme of $\Hc(H)$.  Its equations can be derived from
the Buchberger algorithm for computing Gr\"obner bases.
	For $M$, $N$ monomial ideals in $\Hc(H)$ such that $M>N$ in
        the partial order of Definition~\ref{d:posetorder} we define
        the \emph{edge-scheme} between $M$ and $N$ to be the
        scheme-theoretic intersection
\[ E(M,N):= C_{\prec}(M) \cap C_{\prec^{\opp}}(N).\]
\end{definition}

Altmann and Sturmfels
give an algorithm to compute the edge-scheme  in \cite[Algorithm
  5]{AltmannSturmfels}.

\begin{remark}
	In \cite{AltmannSturmfels} the scheme $C_{\prec}(M)$ is called
          the Schubert scheme $\Omega_c(M)$ in the case that
          $\bf{x}^{\bf{c^+}}\prec \bf{x}^{\bf{c^-}}$.  Choosing a
          suitable isomorphism of $T/T'$ with $\K ^{*}$,
          $C_{\prec}(M)$ consists of all points $I\in \Hc(H)$ such
          that $\lim _{t\to 0}t\cdot I= M$.  In particular, if
          $\Hc(H)$ is smooth, $C_{\prec}(M)$ is the
          Bia{\l}ynicki-Birula cell associated to the fixed point $M$.
\end{remark}

If $\K$ is algebraically closed then $E(M,N)$ is nonempty if and only
if there is an edge in the $T$-graph joining $M$ and $N$.  If $\K$ is not algebraically
closed, the ``only if'' can fail, as we require the existence of a
$\K$-rational point $I$ in the subscheme $E(M,N)$ for there to be an
edge between $M$ and $N$ in the $T$-graph.  This is illustrated in the
following example, which solves a problem of Altmann and
Sturmfels~\cite[Section 5]{AltmannSturmfels}.

\begin{example}
\label{ex:almostall1}
Let $S=\K[x,y]$ be graded by $\mathbb Z^2/\mathbb Z(1,-1)$, so
$\deg(x)=\deg(y)=1$.  Let $M=\<y^5, x^2\rangle$ and $N = \<y^2,
x^5\rangle$. Then the edge-scheme $E(M,N)$ is the subscheme of
$\mathbb A^4$ defined by the ideal $\langle a^4-3a^2b+b^2, c-ad,
1-bd\rangle$, and ideals corresponding to points in the edge-scheme
are given by
	$$ I = \< y^2 +axy + bx^2, x^5\rangle = \< y^5,
x^2+cxy+dy^2\rangle.$$ This can be
computed using the algorithm of \cite[Algorithm 5]{AltmannSturmfels},
or the description given in Section~\ref{s:utahequations}.  This 
scheme is reducible: $a^4 -3a^2b+b^2 = (a^2 - \frac{3+ \sqrt{5}}{2}b)
(a^2 - \frac{3-\sqrt{5}}{2}b)$. It follows from this factorization
that $E(M,N)$ has $\mathbb R$-valued points, but no $\mathbb Q$-valued
points. In particular, this example shows that the $T$-graph of
$\Hilb^{10}(\mathbb A^2)$ depends on the field $\K$.
\end{example}

\subsection{Reduction to the Artinian case}

In this section we prove Theorem~\ref{t:isom}.  This is the only part
of the paper to require details from \cite{HaimanSturmfels}.
Theorem~\ref{t:isom} is only needed in this paper to apply
Theorem~\ref{thm:main} \eqref{thm:main2} in the case where the Hilbert
function does not have finite support, but may be of wider interest.

The ring $\K$ can here be an arbitrary commutative ring; in particular
 $\K=\mathbb Z$ is possible.  We restrict our attention to ideals $I
 \subseteq S$ for which $S/I$ is a locally-free $\K$-module.  By the
 Hilbert function of a homogeneous ideal $I \subseteq S$ with $S/I$ a
 locally-free $\K$-module, we mean the function $A \rightarrow \mathbb
 N$ given by $a~\mapsto~\rk_{\K}(S/I)_a$.

We first recall the construction of the multigraded Hilbert scheme in the
positive-graded case.  The key idea is to restrict to a finite set of
degrees $D$, and consider the Hilbert scheme $\Hilb_{S_D}^h$, which
parameterizes all locally-free $\K$-modules $\oplus_{a \in D} T_a$
with $\rk_{\K}(T_a) = h(a)$ with the property that for all $a,b \in D$
there is a multiplication map $S_{b-a} \times T_a \rightarrow T_b$.
Particular examples of such $\K$-modules are $\oplus_{a \in D}
(S/I)_a$, where $I$ is an ideal with Hilbert function $h$. A major
step in the construction of the multigraded Hilbert scheme 
is to show that for suitably chosen $D$ we
have $\Hilb_{S_D}^h \cong \Hilb_S^h$.

Recall from \cite[Section 3]{HaimanSturmfels} that a finite subset $D$
of the abelian group $A$ is called {\em very supportive} for a Hilbert
function $h:A \rightarrow \mathbb N$ if it satisfies the following
three conditions:

\begin{itemize}
\item[$(g)$] Every monomial ideal with Hilbert function $h$ is
  generated by monomials whose degrees belong to $D$;
\item [$(h)$] Every monomial ideal $M$ whose generators have degrees in $D$
  has the property that if $M$ has Hilbert function $h(a)$ in degree
  $a$ for all $a \in D$, then $M$ has Hilbert function $h$ everywhere;
  and
\item [$(s)$] For every monomial ideal $M$ with Hilbert function $h$,
  the syzygy module of $M$ is generated by syzygies coming from
  relations $x^ux^{v'} =x^vx^{u'} = \lcm(x^u,x^v)$ among generators
  $x^u, x^v$ of $M$ such that $\deg(\lcm(x^u,x^v)) \in D$.
\end{itemize}

Theorem~3.6 of \cite{HaimanSturmfels} says if $D \subset A$ is very
supportive, then $\Hilb_{S_D}^h \cong \Hilb_S^h$, and
\cite[Proposition 3.2]{HaimanSturmfels} implies that such sets exist
for any grading. 

Note that for every positive grading by an abelian group $A$ there
exists a group homomorphism $\phi : A \rightarrow \mathbb Z$ with
$\phi(a)> 0$ whenever $\rk S_a>0$ and $a\neq 0$.

\begin{lemma} \label{l:verysupportive}
Suppose the $A$-grading of $S$ is positive, so there exists a group
homomorphism $\phi : A \rightarrow \mathbb Z$ with $\phi(a)>0$
whenever $\rk S_a >0$ and $a \neq 0$.  Let $D$ be a very supportive
set for $h$, and choose $N>0$ with the property that $\phi(a)<N$ for
all $a \in D$.  Define $\overline{h} : A \rightarrow \mathbb N$ by
$$\overline{h}(a) = \left\{ \begin{array}{ll} 
	 h(a) & \text{if }\phi(a)<N, \\
	 0 & \text{otherwise}.\\
\end{array}
\right.$$ Let $D' = D \cup \{ a : N \leq \phi(a) \leq B(N) \}$, where
every degree $a$ of a generator or minimal syzygy of the monomial ideal
$\langle x^u : \phi(\deg(x^u)) \geq N \rangle$ has $\phi(a) \leq B(N)$.  Then $D'$ is a very supportive set for $\overline{h}$.
\end{lemma}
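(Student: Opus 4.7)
The plan is to verify the three very-supportive conditions $(g)$, $(h)$, $(s)$ for the pair $(\overline{h}, D')$ by exploiting the corresponding conditions for $(h, D)$. The key observation is that there is a bijection between monomial ideals $M \subset S$ with Hilbert function $h$ and monomial ideals $\overline{M} \subset S$ with Hilbert function $\overline{h}$, given by $M \mapsto \overline{M} := M + J$, where $J := \langle x^u : \phi(\deg x^u) \geq N\rangle$. Indeed, if $M$ has Hilbert function $h$, its minimal generators lie in $D$ by $(g)$ for $h$, hence in degrees with $\phi < N$; a direct computation then shows $(M+J)_a = M_a$ when $\phi(a) < N$ and $(M+J)_a = S_a$ when $\phi(a) \geq N$, so $M+J$ has Hilbert function $\overline{h}$. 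Conversely, given $\overline{M}$ with Hilbert function $\overline{h}$, the vanishing of $\overline{h}$ for $\phi(a) \geq N$ forces $J \subseteq \overline{M}$; letting $M$ be the monomial ideal generated by $\{x^u \in \overline{M} : \deg x^u \in D\}$, one checks $M_a = \overline{M}_a$ for all $a \in D$ and applies $(h)$ for $h$ to conclude $M$ has Hilbert function $h$, and a dimension count then gives $\overline{M} = M + J$.

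Condition $(g)$ for $(\overline{h}, D')$ then follows immediately: any minimal generator of $\overline{M} = M + J$ is either a minimal generator of $M$, whose degree lies in $D \subseteq D'$ by $(g)$ for $h$, or a minimal generator of $J$ not already in $M$, whose degree $a$ satisfies $N \leq \phi(a) \leq B(N)$ by the choice of $B(N)$, hence lies in $D'$. Condition $(h)$ is similarly short: given $\overline{M}$ with generators in $D'$ and matching $\overline{h}$ on $D'$, the vanishing of $\overline{h}$ on $D' \setminus D$ forces $\overline{M}_a = S_a$ there; since every minimal generator of $J$ has $\phi$-degree in $[N, B(N)]$, this yields $J \subseteq \overline{M}$, and the subideal generated by the degree-$D$ generators of $\overline{M}$ satisfies the hypotheses of $(h)$ for $h$, so the resulting $M$ has Hilbert function $h$ and $M + J = \overline{M}$ has Hilbert function $\overline{h}$.

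Condition $(s)$ is the most intricate. Writing the minimal generating set of $\overline{M}$ as a disjoint union $G \sqcup H$, with $G$ the minimal generators of $M$ and $H$ those minimal generators of $J$ not already in $M$, I would split the S-pair syzygies of $\overline{M}$ into three classes. The $G \times G$ pairs are syzygies of $M$, and $(s)$ for $h$ yields generation by S-pairs with $\lcm$-degree in $D$. The $H \times H$ pairs come from syzygies of $J$, which are generated (by the hypothesis on $B(N)$) by minimal syzygies with $\lcm$-degree at most $B(N)$, hence in $D'$. For mixed $G \times H$ S-pairs the $\lcm$ lies in $M \cap J$ and is therefore divisible by some minimal generator $g_J$ of $J$; applying Buchberger's chain criterion through $g_J$, one decomposes the mixed S-pair into an $S$-linear combination of an S-pair between $G$ and $g_J$ and an S-pair between $g_J$ and $H$, where in the case $g_J \in G_J \cap M$ a further application of the chain criterion through the unique $g \in G$ dividing $g_J$ reduces the $G \times g_J$ piece to a $G \times G$ syzygy.

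The hard part is completing this reduction in condition $(s)$: one must verify that iterating the chain criterion terminates with S-pairs whose $\lcm$-degrees are controlled either by $B(N)$ (for pairs involving $H$) or by degrees in $D$ (for pairs within $G$), using crucially that any lcm appearing in a mixed S-pair is forced into $M \cap J$ and that the minimal generators of $J$ of $\phi$-degree greater than $B(N)$ are redundant through the chain criterion. I expect the bookkeeping to parallel Haiman--Sturmfels' construction of very supportive sets in \cite[Proposition 3.2]{HaimanSturmfels}, carefully tracking which $\lcm$-degrees fall in $D$, which fall in $[N, B(N)]$, and which can be eliminated.
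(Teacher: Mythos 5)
Your overall strategy coincides with the paper's. The identity $\overline{M}=M+J$ with $J=\langle x^u:\phi(\deg x^u)\geq N\rangle$ is exactly the decomposition the paper establishes (there written as the claim $M=M'+\langle x^u: N\leq\phi(\deg x^u)\leq B(N)\rangle$, together with the observation that this ideal equals $J$ by the choice of $B(N)$), and your arguments for conditions $(g)$ and $(h)$ are essentially the paper's arguments.

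The genuine gap is in condition $(s)$, exactly where you concede the argument is unfinished. For a mixed pair $(x^u,x^v)$, with $x^u$ a generator in a degree from $D$ and $x^v$ a minimal generator of $J$, you propose to decompose the Koszul syzygy via the chain criterion through some minimal generator $g_J$ of $J$ dividing the lcm. But this reduction is not shown to terminate in pairs whose lcm-degrees lie in $D'$: the chain criterion replaces $\sigma(x^u,x^v)$ by syzygies whose lcms merely divide $\lcm(x^u,x^v)$, and one of the two resulting pairs is again a mixed pair of the same type (unless $g_J$ happens to lie in $M$), so no progress is guaranteed; you explicitly leave this ``bookkeeping'' open. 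The step you are missing is that no reduction is needed: the lcm-degree $a$ of a mixed pair already lies in $D'$. Indeed, $\lcm(x^u,x^v)$ is a multiple of $x^v\in J$, so $\phi(a)\geq N$ because $\phi$ is nonnegative on degrees of monomials; and taking a multiple $x^{u'}$ of $x^u$ that is a minimal generator of $J$, the monomial $\lcm(x^u,x^v)$ divides $\lcm(x^{u'},x^v)$, whose degree $b$ is the degree of a syzygy between minimal generators of $J$ and hence satisfies $\phi(b)\leq B(N)$ by the choice of $B(N)$. Thus $N\leq\phi(a)\leq\phi(b)\leq B(N)$, so $a\in D'$ by construction, and every mixed Koszul syzygy is itself one of the allowed generators. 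This short divisibility argument replaces your entire third paragraph; without it, or a completed version of your chain-criterion reduction, the verification of $(s)$ — and hence the lemma — is not established.
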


\begin{proof}
Let $M$ be a monomial ideal with Hilbert function $\overline{h}$, and
let $M'$ be the ideal generated by those monomials in $M$ whose
degrees are contained in $D$.  Then the Hilbert function of $M$ and
$M'$ agree for degrees in $D$ by construction. Since $D$ is very
supportive for $h$, and $M'$ is generated in degrees in $D$, by
property $(h)$ for $D$ the monomial ideal $M'$ has Hilbert function
$h$ everywhere.  This means that $M'_a=M_a$ when $\phi(a)<N$.

Let $M''=M' + \langle x^u : N \leq \phi(\deg(x^u)) \leq B(N)
\rangle$.  We claim that $M''=M$, which shows that $D'$ satisfies
condition $(g)$.  Indeed, since the grading is positive, $M''_a=M'_a$, and thus $M''_a=M_a$, when $\phi(a)<N$. 
 By the definition of $B(N)$, we have \[\langle
x^u : N \leq \phi(\deg(x^u)) \leq B(N) \rangle = \langle x^u :
N \leq \phi(\deg(x^u)) \rangle,\] so $M''_a=S_a$ when $\phi(a)\geq N$, and thus $M''_a=M_a$ when $\phi(a) \geq N$.    Thus $M''=M$ as required.

Since $D$ is very supportive for $h$, all syzygies between generators
of the ideal $M'$ are in degrees in $D$.  By the construction of $B(N)$, all minimal 
syzygies between generators of $\langle x^u : N \leq \phi(\deg(x^u))
\rangle$ have degrees in $D'$.  Finally, if $x^u$ is a minimal
generator of $M'$ and $x^v$ is a minimal generator of $M$ whose degree
is not in $D$, then there is some multiple $x^{u'}$ of $x^u$ that is a
minimal generator of $\langle x^u : N \leq \phi(\deg(x^u)) \rangle$.
The degree $a$ of the syzygy between $x^u$ and $x^v$ has $N \leq
\phi(a) \leq \phi(b)$, where $b$ is degree of the syzygy between
$x^{u'}$ and $x^v$, so the fact that $a \in D'$ follows from the fact
that $b \in D'$.  Thus $M$ satisfies condition $(s)$.

Suppose now that $M$ is a monomial ideal whose generators have degrees
in $D'$, and for which the Hilbert function of $M$ agrees with
$\overline{h}$ for degrees in $D'$.  As before, let $M'$ be the ideal
generated by those monomials in $M$ with degrees belonging to $D$.
Since $D$ is very supportive for $h$, and $M'$ has Hilbert function
$h$ in degrees in $D$, $M'$ has Hilbert function $h$, and thus
$M'_a=M_a$ when $\phi(a)<N$.  Since $\overline{h}(a)=0$ whenever the
$N \leq \phi(a) \leq  B(N)$, $M$
contains $\langle x^u : N \leq \phi(\deg(x^u)) \leq B(N) \rangle =
\langle x^u : N \leq \phi(\deg(x^u)) \rangle$, so $M_a=S_a$ when $\phi(a) \geq N$.
Thus $M$ has Hilbert function $\overline{h}$, so condition $(h)$ is
satisfied.
\end{proof}

We can now prove Theorem~\ref{t:isom}.

\begin{proof}[Proof of Theorem~\ref{t:isom}]
Fix a very supportive set $D$ for $h$.  By \cite[Theorem
  3.6]{HaimanSturmfels} we have $\Hilb^h_{S_D} \cong \Hilb_S^h$.
Choose a group homomorphism $\phi: A \rightarrow \mathbb Z$ with $\phi(a)>0$ whenever
$\rk S_a >0$ and $a \neq 0$, and choose $N>0$ with the property that
$\phi(a)<N$ for all $a \in D$.  Define $\overline{h} : A \rightarrow
\mathbb N$ by setting $\overline{h}(a)=h(a)$ if $\phi(a)<N$, and
$\overline{h}(a)=0$ otherwise.  Let $D' = D \cup \{a : N \leq \phi(a)
\leq B(N) \}$, where $B(N)$ is as in Lemma~\ref{l:verysupportive}.
By Lemma~\ref{l:verysupportive}, $D'$ is a very supportive set for
$\overline{h}$, so $\Hilb^{\overline{h}}_S \cong
\Hilb^{\overline{h}}_{S_{D'}}$.

Consider now the equations for $\Hilb_{S_D}^{\overline{h}}$ and
$\Hilb_{S_{D'}}^{\overline{h}}$.  The Hilbert scheme $\Hilb_{S_D}$ is
constructed as a subscheme of the product of Grassmannians $\prod_{a
  \in D} G(h(a),S_a)$.  Each ideal $I \in \Hilb_{S_D}$ gives rise to
the codimension-$h(a)$ subspace $I_a$ of $S_a$.  The equations
defining the Hilbert scheme are the quadratic equations in the
Pl\"ucker coordinates on the Grassmannians that record the fact that
for $a, b \in D$, $x^u I_a \subseteq I_b$ for all $x^u \in S_{b-a}$.
See \cite[Corollary 3.15]{HaimanSturmfels} for more details.

  Since $\overline{h}(a)=0$ for all $a
\in D' \setminus D$, the Grassmannian $G(\overline{h}(a),S_a)$ is a
point for all such $a$, so the second Hilbert scheme embeds into the
same product of Grassmannians as the first.  All quadratic equations
in either case then come from pairs $a,b \in D$, so the equations
defining each Hilbert scheme coincide, and
$\Hilb^{\overline{h}}_{S_{D'}} \cong \Hilb^{\overline{h}}_{S_D}$.
Since $\overline{h}(a) = h(a)$ for all $a \in D$, we have
$\Hilb^{\overline{h}}_{S_D} \cong \Hilb^{h}_{S_D}$.  The choice of $D$
being very supportive means that $\Hilb^h_{S_D} \cong \Hilb^h_S$, so
$\Hilb^h_S \cong \Hilb^{\overline{h}}_S$ as required.
\end{proof}

\begin{remark}
Note that Theorem~\ref{t:isom} implies that every pathology that
exists for a positively-graded multigraded Hilbert scheme also exists
for one where the Hilbert function $\overline{h}$ has finite support.
These can be thought of as fixed-loci for group actions on Hilbert
schemes of points in $\mathbb A^n$, so this means that all
(positively-graded) Hilbert schemes are of this form.
In particular, there must exist such Hilbert schemes that are
disconnected (from \cite{Santos}), and that have non-reduced components
(from \cite{Mumford}).
\end{remark}

\begin{corollary} \label{c:reductionstep2}
To decide whether there is an edge in the $T$-graph between a pair of
monomial ideals $M, N \in \Hilb^h_S$, it suffices to assume that $S$
is graded by $\mathbb Z^n/\mathbf{c} \mathbb Z$, and $\sum_{a \in
  \mathbb Z^n/\c \ZZ} h(a) < \infty$.
\end{corollary}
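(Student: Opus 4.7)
The plan is to chain Corollary~\ref{c:reductionstep} with Theorem~\ref{t:isom}, producing the two reductions in turn. First, apply Corollary~\ref{c:reductionstep} to the pair $M, N \in \Hilb^h_S$. This yields a vector $\c \in \ZZ^n$ and a Hilbert function $H \colon \ZZ^n/\ZZ\c \to \NN$ with $M, N \in \Hc(H)$ such that $M$ and $N$ are connected by an edge in the $T$-graph of $\Hilb^h_S$ if and only if they are connected by an edge in the $T$-graph of $\Hc(H)$. By the construction of $\c$ in Proposition~\ref{p:torusgrading}, the induced $\ZZ^n/\ZZ\c$-grading on $S$ is positive.

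Second, apply Theorem~\ref{t:isom} to the positively-graded Hilbert scheme $\Hc(H) = \Hilb^H_S$, where the ambient grading group is now $A = \ZZ^n/\ZZ\c$. This produces a Hilbert function $\overline{H} \colon \ZZ^n/\ZZ\c \to \NN$ with $\sum_{a} \overline{H}(a) < \infty$ together with an isomorphism $\Phi \colon \Hc(H) \xrightarrow{\sim} \Hilb^{\overline{H}}_S$ that respects the $T$-action.

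Since $\Phi$ is $T$-equivariant, it sends $T$-fixed points to $T$-fixed points and one-dimensional $T$-orbits to one-dimensional $T$-orbits, with closures mapping to closures. Consequently $\Phi$ induces an isomorphism of $T$-graphs, so $M$ and $N$ are joined by an edge in the $T$-graph of $\Hc(H)$ exactly when $\Phi(M)$ and $\Phi(N)$ are joined by an edge in the $T$-graph of $\Hilb^{\overline{H}}_S$. Combining the two equivalences yields the stated reduction.

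Essentially no step presents an obstacle beyond invoking the two earlier results; the only point requiring a bit of care is observing that the $\ZZ^n/\ZZ\c$-grading produced by Corollary~\ref{c:reductionstep} is positive, so that Theorem~\ref{t:isom} applies, and that $T$-equivariance of $\Phi$ is enough to preserve the combinatorial structure of the $T$-graph (vertices, edges, and incidences), which is immediate from the definitions.
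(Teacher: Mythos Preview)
Your proof is correct and follows essentially the same approach as the paper: chain Corollary~\ref{c:reductionstep} to reduce to a positive $\ZZ^n/\ZZ\c$-grading, then apply Theorem~\ref{t:isom} to pass to a finite-support Hilbert function, using the $T$-equivariance of the isomorphism to transport the edge. The paper's proof is slightly terser but makes the identical two-step reduction.
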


\begin{proof}
By Corollary~\ref{c:reductionstep}, there is an edge between $M$ and
$N$ if and only if there is $\mathbf{c} \in \mathbb Z^n$ and $H \colon
\mathbb Z^n/\mathbb Z \mathbf{c} \rightarrow \mathbb N$ for which $M,N \in \Hc(H)$, and
there is an edge between $M$ and $N$ in $\Hc(H)$.  The resulting
grading by $\mathbb Z^n/\mathbb Z \mathbf{c}$ is positive, so by
Theorem~\ref{t:isom} there is $H'$ with $\sum_{a \in \ZZ^n/\ZZ\c}
H'(a) < \infty$ and $\Hc(H) \cong \Hc(H')$.  Thus there is an edge
between $M$ and $N$ in $\Hc(H)$ if and only if there is an edge
between the ideals corresponding to $M$ and $N$ in $\Hc(H')$ .
\end{proof}

\section{Necessary conditions for a $T$-edge}
\label{s:section2}

In this section we prove Theorem~\ref{thm:main}, giving necessary
conditions for the existence of an edge in the $T$-graph between two
monomial ideals in $\Hc(H)$.  By Corollary~\ref{c:reductionstep2},
this gives a necessary condition for there to be an edge between
monomial ideals in any $\Hilb^h_S$.  The condition that the Hilbert
function $H$ has $\sum_{a \in \mathbb Z^n/\mathbb Z \mathbf{c}} H(a) <
\infty$ is unnecessary in the first part of  this section, so we do not require it.

Recall the definition of an arrow map (Definition~\ref{d:arrowmap}).
We illustrate the concept of an arrow map in the following 
example. 

\begin{example} \label{e:arrowmaps}
\begin{enumerate}
\item Let $S=\K[x,y]$ be graded by $\mathbb Z^2/\mathbb Z(2,-1)$, so
  $\deg(x)=1$, $\deg(y)=2$, and let $\prec$ be the lexicographic order
  with $x \prec y$.  Let $M = \langle x^8,y \rangle$, and $N = \langle
  x^4,y^2 \rangle$.  Then an arrow map between $M$ and $N$ is given by
  the following set of pairs $(m, f(m))$: $\{ (y,x^4), (xy,x^5),$ $
  (x^2y,x^6), (x^3y,x^7)\} \cup \{ (m,m) : m \in \Mon(\langle x^8,
  x^4y, y^2 \rangle)$.  This is illustrated on the left of
  Figure~\ref{f:arrowmap1}.  The grey shaded monomials are the
  standard monomials of $M$, and the monomials encased by the thick
  black line are the standard monomials of $N$.  A dot in the box
  corresponding to a monomial $m$ indicates that $f(m)=m$.

\item Let $S = \K[x,y]$ be graded by $\mathbb Z^2/\mathbb Z(1,-1)$, so
  $\deg(x)=\deg(y)=1$, and let $\prec$ be the lexicographic order with
  $x \prec y$.  Let $M= \langle x^2y^2, xy^3 \rangle$, and $N= \langle
  x^3y,x^2y^2 \rangle$.  Then an arrow map between $M$ and $N$ is
  given by setting $f(x^ay^b)=x^{a+1}y^{b-1}$ for all $x^ay^b \in M$.
  This is illustrated on the right of Figure~\ref{f:arrowmap1}.
\end{enumerate}

\begin{figure}
\center{
\includegraphics[scale=0.8]{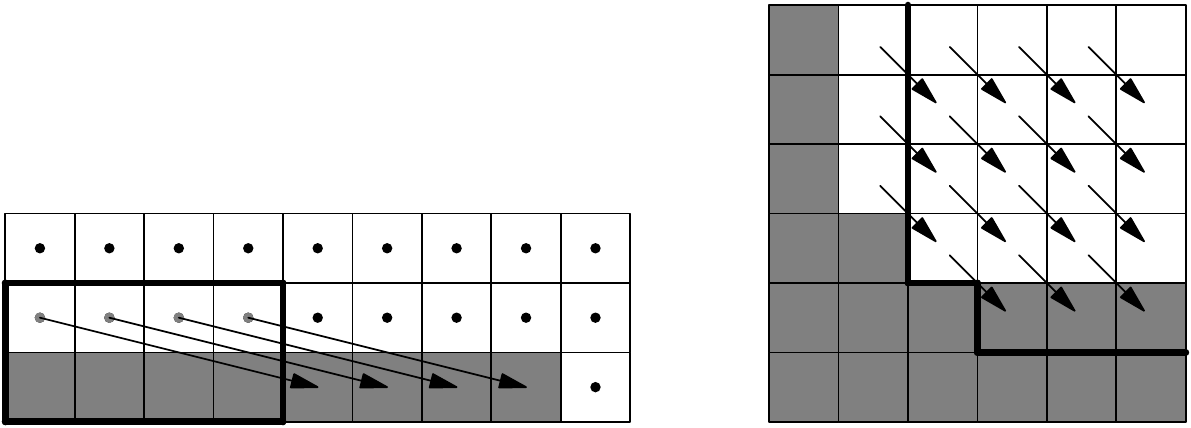}}
\caption{\label{f:arrowmap1} The arrow maps of Example~\ref{e:arrowmaps}. 
}
\end{figure}
\end{example}

\begin{proposition} \label{thm:arrowmapnecessary}
	Let $I$ be homogeneous with respect to a positive
        $\ZZ^n/\ZZ\c$-grading, and let $\prec$ be a term order on
        $S$.  Set $M = \inn_{\prec} I$, and $N= \inn_{\prec^{\opp}}
        I$.  The map $f: \Mon(M) \to \Mon (N)$ defined by
	\[m \mapsto \max_{\prec}\{\inn_{\prec^{\opp}}(p) \mid p \in I \text{ 
homogeneous  
and } \inn_{\prec}(p)=m\}\]
is an arrow map. 
\end{proposition}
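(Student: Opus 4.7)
The plan is to verify, in turn, the three conditions of Definition~\ref{d:arrowmap} directly from the definition of $f$. Assume without loss of generality that $x^{\c^+} \prec x^{\c^-}$; then the monomials of any fixed $\ZZ^n/\ZZ\c$-degree form a finite chain $x^u \succ x^{u+\c} \succ \cdots \succ x^{u+k\c}$, and $\prec$-order decreases as we move along the chain.

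For condition \eqref{e:bijectivedecreasing}, the set $S_m := \{\inn_{\prec^{opp}}(p) : p \in I \text{ homogeneous},\ \inn_\prec(p) = m\}$ is nonempty (since $m \in M = \inn_\prec I$) and finite (each element is a monomial of $\Mon(N)$ in the fixed finite-dimensional degree of $m$), so the $\prec$-maximum is attained in $\Mon(N)$, making $f$ well-defined and degree-preserving. The inequality $m \succeq f(m)$ holds because any $p$ with $\inn_\prec(p) = m$ has $m$ as its $\prec$-largest monomial, so $m \succeq \inn_{\prec^{opp}}(p)$. For injectivity, if $f(m_1) = f(m_2) = n$ with $m_1 \succ m_2$, realized by $p_1, p_2 \in I$, then after rescaling so their coefficients at $n$ coincide, the difference $p_1 - p_2 \in I$ satisfies $\inn_\prec(p_1 - p_2) = m_1$ but has coefficient $0$ at $n$, forcing $\inn_{\prec^{opp}}(p_1 - p_2) \succ n$, contradicting the maximality defining $f(m_1)$. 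Since $|\Mon(M) \cap S_a| = \dim I_a = |\Mon(N) \cap S_a|$ in every degree, the injection $f$ is a bijection.

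For condition \eqref{e:mshorter}, fix $m' = x^\alpha m$ and let $p \in I$ realize $f(m)$. Multiplication by the monomial $x^\alpha$ preserves both $\prec$ and $\prec^{opp}$, so $p' := x^\alpha p \in I$ has $\inn_\prec(p') = m'$ and $\inn_{\prec^{opp}}(p') = x^\alpha f(m)$; moreover the exponent vectors of $x^\alpha f(m)$ and $m'$ differ by the same multiple of $\c$ as those of $f(m)$ and $m$, so $d(m', x^\alpha f(m)) = d(m, f(m))$. By the maximality defining $f(m')$, one has $f(m') \succeq x^\alpha f(m)$, and reading this inequality along the chain of degree-$\deg(m')$ monomials yields $d(m', f(m')) \leq d(m, f(m))$.

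The main step is condition \eqref{e:mprimeshorter}, which I plan to derive by establishing the dual characterisation
\[ f^{-1}(n) \;=\; \max_{\prec^{opp}}\bigl\{\inn_\prec(p) : p \in I \text{ homogeneous},\ \inn_{\prec^{opp}}(p) = n\bigr\}, \]
and then repeating the proof of \eqref{e:mshorter} with $\prec$ and $\prec^{opp}$ interchanged. Denote the right-hand side by $g(n)$; the inclusion $f(g(n)) \succeq n$ is immediate from any witness $p$. For the reverse, suppose $f(g(n)) = n^\dagger \succ n$, realized by $p^\dagger$, and let $p$ realize $g(n)$, so both have $\inn_\prec = g(n)$. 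After rescaling the leading coefficients at $g(n)$ to coincide, $p - p^\dagger \in I$ has $\inn_\prec \prec g(n)$, while its coefficient at $n$ equals that of $p$ (all terms of $p^\dagger$ are $\prec$-above $n^\dagger \succ n$, hence contribute nothing at $n$), giving $\inn_{\prec^{opp}}(p - p^\dagger) = n$ and contradicting the $\prec$-minimality of $g(n)$. With $f^{-1} = g$ in hand, condition \eqref{e:mprimeshorter} follows by applying the argument of \eqref{e:mshorter} to $g$ with $\prec$ and $\prec^{opp}$ swapped. The only subtlety I anticipate is pinning down the dual characterisation $f^{-1} = g$ correctly, since the defining formula for $f$ mixes the two term orders and a naive symmetry argument does not immediately identify the inverse map.
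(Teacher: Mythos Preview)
Your argument is correct. Conditions \eqref{e:bijectivedecreasing} and \eqref{e:mshorter} are handled exactly as in the paper (your injectivity step cancels the $\prec^{\opp}$-leading terms, which is the same cancellation the paper performs). The genuine difference is in condition \eqref{e:mprimeshorter}.

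The paper proves \eqref{e:mprimeshorter} via an auxiliary inductive fact $(\dag)$: given any homogeneous $p\in I$ with $\inn_{\prec}(p)=m$ and $\inn_{\prec^{\opp}}(p)=m'$, one can find $\widetilde m\preceq m$ with $f(\widetilde m)=m'$, by repeatedly subtracting off a witness for $f(m)$ and inducting on $d(m,m')$. This argument never identifies $f^{-1}$ explicitly. You instead establish the closed-form dual description
\[
f^{-1}(n)=\max_{\prec^{\opp}}\bigl\{\inn_{\prec}(p):p\in I\text{ homogeneous},\ \inn_{\prec^{\opp}}(p)=n\bigr\},
\]
using a single cancellation at the $\prec$-leading monomial $g(n)$ and the observation that on a fixed $\ZZ^n/\ZZ\c$-degree the orders $\prec$ and $\prec^{\opp}$ are opposite (so $\max_{\prec^{\opp}}=\min_{\prec}$). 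Once $g=f^{-1}$ is known, \eqref{e:mprimeshorter} is literally \eqref{e:mshorter} for $g$ with the roles of $\prec$ and $\prec^{\opp}$ swapped. Your route is more symmetric and yields the extra information that $f^{-1}$ is the arrow map attached to $I$ for the opposite order; the paper's route avoids having to introduce and analyze the second map $g$, at the cost of the short induction $(\dag)$. Both are of comparable length.
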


\begin{proof}
	First note that the image of $f$ is contained in $\Mon(N)$,
        since $\inn_{\prec^{\opp}}(I) = N$.  Moreover, it follows from
        the definition that $f$ is degree-preserving and $m\succeq
        f(m)$.
For every $m\in \Mon(M)$ choose a homogeneous 
polynomial $p_m\in I$ such that 
$\inn_{\prec}(p_m) = m$ and
$\inn_{\prec^{\opp}}(p_m) = f(m)$.

To see that $f$ is a bijection,  we first show it is injective. Assume
$f(m)=f(m')$ for $m\succ m'$.  We denote by $\lc(f)$ the leading
coefficient of a polynomial $f$.  This is the coefficient of the
largest monomial occurring in $f$ with respect to the term order.
Then $q=\lc_{\prec^{\opp}}(p_{m'})p_m
-\lc_{\prec^{\opp}}(p_m)p_{m'}\in I$ has $\inn_{\prec}q =m$, and
$\inn_{\prec^{\opp}}q \succ f(m)$, which contradicts the construction
of $f(m)$. Since $M$ and $N$ have the same $\mathbb Z^n/\mathbb
Z\mathbf{c}$-graded Hilbert function, and the grading is positive, any
degree preserving injection $\Mon(M) \to \Mon(N)$ is a bijection.

To see condition (\ref{e:mshorter}) of the definition of an arrow map, let $m\in \Mon(M)$ and fix a multiple $m' =
m''m$ of $m$.  Note that $m''p_m \in I$ is homogeneous, with
$\inn_{\prec}(m''p_m)=m'$ and $\inn_{\prec^{\opp}}(m''p_m) = m''f(m)$.
Hence $f(m')\succeq m''f(m)$ and so $d(m',f(m'))\leq d(m,f(m))$.

To show condition (\ref{e:mprimeshorter}) holds, we use the following fact:

\begin{tabular}{cp{11cm}}
($\dag$) & If there exists a homogeneous polynomial $p\in I$ with
  $\inn_{\prec}(p)=m$, $\inn_{\prec^{\opp}}(p)=m'$, then there exists
  $\widetilde{m}\in \Mon(M)$ such that $\widetilde{m} \preceq m$ and
  $f(\widetilde{m})=m'$. In particular $d(\widetilde{m},m') \leq
  d(m,m')$.
\end{tabular}

We prove this fact by induction on $d(m,m')$.  If $d(m,m') = 0$, then
$m=m'=\widetilde{m}=f(m)$, and we can take $\widetilde{m}=m$.
Otherwise, let $p_m \in I $ be a homogeneous polynomial with
$\inn_{\prec}(p_m)=m$ and $\inn_{\prec^{\opp}}(p_m)=f(m)$.  If $f(m) =
m'$, we are done. Assume $f(m) \neq m'$. 
 Then for $q=\lc_{\prec}(p_m)p -\lc_{\prec}(p)p_m \in I$, we
have $\inn_{\prec^{\opp}}(q)=m'$, and $m\succ
\inn_{\prec}(q)=:m_q$.  Since $d(m_q,m') <d (m,m')$, there exists
$\widetilde{m}\preceq m_q \prec m$ with $f(\widetilde{m})=m'$ by the
induction hypothesis, finishing the proof of ($\dag$).

To see condition  (\ref{e:mprimeshorter}), let $m\in \Mon(N)$ and fix a multiple
$m' = mm''$ of $m$. Note that for $p = m''p_{f^{-1}(m)}$, we have
$\inn_{\prec}(p)= f^{-1}(m)m''$ and $\inn_{\prec^{\opp}}(p) =
mm''=m'$.  By ($\dag$) there exists $\widetilde{m}\prec f^{-1}(m)m''$ with
$f(\widetilde{m}) = m$, and (\ref{e:mprimeshorter}) follows.
\end{proof}

The following Corollary is a more general version of 
the first part of Theorem \ref{thm:main}. 

\begin{corollary}\label{cor:main}
	Let $M, N$ be monomial ideals in an arbitrary multigraded 
	Hilbert scheme 
	and assume that there exists an edge 
	between $M$ and $N$ in the $T$-graph. 
	Then there exists an arrow map $f \colon M \to N$ 
        with respect to 
	some  grading by $\ZZ^n/\ZZ\c$ and some term order $\prec$.
\end{corollary}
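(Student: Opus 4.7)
The plan is to chain together three results already in the excerpt: Corollary~\ref{c:reductionstep}, Proposition~\ref{p:torusgrading}, and Proposition~\ref{thm:arrowmapnecessary}. Essentially, the corollary amounts to removing the finite-support hypothesis from part~\eqref{thm:main1} of Theorem~\ref{thm:main}, and in the proof of that part the artinian assumption is never actually used, so the proof is a straightforward bookkeeping argument.

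First I would invoke Corollary~\ref{c:reductionstep} applied to the hypothesis: since there is an edge between $M$ and $N$ in the $T$-graph of $\Hilb^h_S$, there exist a vector $\c\in\ZZ^n$ and a Hilbert function $H\colon \ZZ^n/\ZZ\c \to \NN$ so that $M, N \in \Hc(H)$ are joined by an edge in the $T$-graph of $\Hc(H)$. Next, Proposition~\ref{p:torusgrading} (in the form produced by that reduction) guarantees an ideal $I \in \Hc(H)$ which is homogeneous with respect to the $\ZZ^n/\ZZ\c$-grading, a term order $\prec$, and satisfies $\inn_{\prec} I = M$ and $\inn_{\prec^{\opp}} I = N$. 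Proposition~\ref{p:torusgrading} further tells us that the $\ZZ^n/\ZZ\c$-grading is positive, which is the only hypothesis required for the opposite term order $\prec^{\opp}$ to even make sense (Definition~\ref{d:opp}).

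Finally, I would feed this data directly into Proposition~\ref{thm:arrowmapnecessary}: the ideal $I$, the positive $\ZZ^n/\ZZ\c$-grading, and the term order $\prec$ satisfy exactly the hypotheses of that proposition, so the map
\[
f\colon \Mon(M) \to \Mon(N), \qquad m\mapsto \max_{\prec}\bigl\{\inn_{\prec^{\opp}}(p) \;\bigl|\; p\in I \text{ homogeneous, } \inn_{\prec}(p)=m\bigr\}
\]
is an arrow map with respect to $\prec$. This is precisely the arrow map required.

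There is no genuine obstacle: the content of this corollary is that the proof of Proposition~\ref{thm:arrowmapnecessary} goes through verbatim without any finite-support hypothesis on $H$, so the only work is to make sure the reduction step in Corollary~\ref{c:reductionstep} gives us what Proposition~\ref{thm:arrowmapnecessary} needs. The mildest point of care is simply to note that the grading supplied by Proposition~\ref{p:torusgrading} is automatically positive (as recorded in the statement), which is what licenses speaking of $\prec^{\opp}$ and invoking Proposition~\ref{thm:arrowmapnecessary} at all.
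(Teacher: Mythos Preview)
Your proposal is correct and matches the paper's approach: the corollary is stated in the paper without a separate proof, as it is meant to follow immediately from Proposition~\ref{thm:arrowmapnecessary} together with the reduction of Corollary~\ref{c:reductionstep}/Proposition~\ref{p:torusgrading}, exactly as you have laid out. Your only addition is to make the bookkeeping explicit, which is fine.
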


\begin{example}
Let $\K[x,y]$ have the standard grading, so $\c=(1,-1)$, and let
$\prec$ be the lexicographic term order with $x \prec y$.  Let $I=
\langle x^2 + 2yx + 2y^2, y^4\rangle$.  When $\mathrm{char}(\K) \neq
2$, $\inn_{\prec} I = \langle x^4, y^2 \rangle$ and
$\inn_{\prec^{\opp}} I = \langle x^2, y^4\rangle$.  The arrow map
induced by $I$ is given by $f(y^2) = x^2, f(y^3) = x^2y, f(xy^2) =
x^3, f(xy^3) = x^3y$ and $f(m)=m$ for all other $m \in \langle x^4,y^2
\rangle$.
\end{example}

\begin{example}
	Let $S=\K[x_1,x_2,x_3,x_4]$ be
	graded in by $\mathbb Z^4/\mathbb Z(2,-1,0,0)$, and fix $\prec$ with
	$x_1\succ x_2 \succ x_3 \succ x_4$.  Let $M=\langle
	x_1^2,x_2^2,x_3^2,x_4^2,x_1x_2x_3,x_1x_4,x_2x_4,x_3x_4 \rangle$, and let $N
	= \langle x_2, x_1^4,x_1^3x_3,x_3^2,x_1x_4,x_3x_4,x_4^2 \rangle$.  The
	ideals $M$ and $N$ have  the same 
	$\mathbb Z^4/\mathbb Z(2,-1,0,0)$-graded Hilbert
	function $H$.  The Hilbert scheme $\Hc(H)$ is a subscheme of
	$\Hilb^8(\mathbb A^4)$.  The function $\Mon(M) \rightarrow \Mon(N)$ given
	by $f(x_1^2)=x_2$, $f(x_1^3)=x_1x_2$, $f(x_1^2x_3)=x_2x_3$ and $f(m)=m$
	otherwise is an arrow map.  This arrow map comes from the ideal $I=
	\langle x_2-x_1^2, x_1^4,x_1^3x_3,x_3^2,x_1x_4,x_3x_4,x_4^2 \rangle$.
\end{example}

While Proposition~\ref{thm:arrowmapnecessary} shows that an ideal
gives an arrow map, the following example
shows that not all arrow maps are induced by 
an ideal.

\begin{example}\label{ex:almostall}
Let $S=\K[x,y]$ be graded by $\mathbb Z^2/\mathbb Z(1,-1)$, so
$\deg(x)=\deg(y)=1$, and let $M=\<y^5, x^2\rangle$ and $N = \<y^2,
x^5\rangle$, as in Example~\ref{ex:almostall1}.  
	
Let $\prec$ be the lexicographic order with  $x \prec y$.  Any arrow map $f$ from $\Mon(M)$ to
$\Mon(M')$ must satisfy $f(y^2)=x^2$, $f(y^3)=x^2y$, $f(xy^2)=x^3$,
$f(y^4)=x^2y^2$, $f(xy^3)=x^3y$, and $f(x^2y^2)=x^4$, and  
$f(m)=m$ if $\deg(m) \geq 6$.  However there are three
possibilities for the map $f$ in degree five.  In all cases we have
$f(y^5)=y^5$ and $f(x^5)=x^5$, but we can have $\{f(xy^4)=x^2y^3,
f(x^2y^3)= x^4y, f(x^3y^2)= x^3y^2\}$, $\{f(xy^4)= x^3y^2, f(x^2y^3)=
x^2y^3, f(x^3y^2)=x^4y \}$, or $\{f(xy^4)=x^2y^3 , f(x^2y^3)= x^3y^2,
f(x^3y^2)= x^4y\}$.  Of these, only the last one 
is induced by an ideal as in the statement of 
Proposition~\ref{thm:arrowmapnecessary}.
Indeed, for any ideal $I \in E(M,N)$ we have $axy^4 + bx^2y^3 \in I$, and the equations for $E(M,N)$ imply that 
$b\neq 0$
and $a\neq 0$. So $f(xy^4)=x^2y^3$ for any arrow map induced from $I
\in E(M,N)$.  The analogous equation $cx^4y+dx^3y^2 \in I$ rules out
the second possibility.
\end{example}

\begin{proof}[Proof of Theorem~\ref{thm:main}]
	  The first part of the theorem is a special case of
        Corollary \ref{cor:main}.  For the second, we first observe that for $I \in
        \Hc(H)$ we have $Q \subseteq I$.  Indeed, for every $i$ choose
        whichever term order $\prec$ or $\prec^{\opp}$ agrees with the
        lexicographic term order with $x_i$ smallest. Since $x_i^{r_i}
        \in M \cap N$, there exists $f$ with initial term $x_i^{r_i}$
        with respect to this term order, so $x_i^{r_i} \in I$.

 To see the existence of an arrow map $\hat{f} \colon \Mon((Q:M))
 \rightarrow \Mon(Q:N))$, it suffices to show that for 
any term order $\prec$ we have
\[ \inn (Q:I) = (Q: \inn (I)).\]
This means that $(Q:I)$ is homogeneous with respect to the $\ZZ^n/\ZZ\c$-grading with two initial ideals $\inn_{\prec} (Q:I) = (Q:M)$ 
and $\inn_{\prec^{\opp}} (Q:I) = (Q:N)$. The existence of 
an arrow map now follows from Proposition~\ref{thm:arrowmapnecessary}.

For any ideals $J,K$, we have $\inn(J)\inn(K) \subseteq \inn(JK)$, so 
$\inn (Q:I) \inn(I) \subseteq \inn((Q:I)I) \subseteq \inn Q = Q$.
This implies that $\inn (Q:I) \subseteq (Q: \inn(I))$.  Since
$\dim_{\K}(S/\inn(Q:I)) =\dim_{\K}S/(Q:I)$, to show equality it
suffices to show that $\dim_{\K}(S/(Q:I)) = \dim_{\K}(S/(Q: \inn(I)))$.  

Note that $S/Q$ is a zero-dimensional ring that is a complete
intersection, hence Gorenstein.  Thus $D(-)=\Hom_{S/Q}(-,S/Q)$ is a
dualizing functor from the category of finitely generated
$S/Q$-modules to itself (see \cite[\S 21.1 and \S 21.2]{Eisenbud}).
Since $Q\subset I$, $S/I$ is a $S/Q$-module.  We have an isomorphism
$D(S/I) = \Hom_{S/Q}(S/I,S/Q) \cong (0:_{S/Q} I/Q) =
(Q:I)/Q$, where the isomorphism takes $\phi \in \Hom_{S/Q}(S/I,S/Q)$
to $\phi(1)$. Thus 
$\dim_{\K}(S/I) = \dim_{\K}(D(S/I)) = \dim_{\K}((Q:I)/Q) =
\dim_{\K}(S/Q) - \dim_{\K}(S/(Q:I))$.  The desired equality follows
from the fact that $\dim_{\K}(S/I) = \dim_{\K}(S/\inn(I))$.
\end{proof}

The following example shows that the conditions of
Theorem~\ref{thm:main} (\ref{thm:main1}) and (\ref{thm:main2}) are not
equivalent. In particular, for monomial ideals $M$ and $N$, the
existence of an arrow map $f\colon M \to N$ is not sufficient for the
edge $E(M,N)$ to be nonempty.  We do not have an example where both
conditions of Theorem~\ref{thm:main} are not sufficient.

\begin{example} 
	\label{ex:all}
Let  $S=\K[x,y]$ be graded by 
$\deg(x)=\deg(y)=1$ and let $\prec$ be the lexicographic order with $x \prec y$. 
Let $M
= \langle x^5, x^3y^2,y^4\rangle$, $N
=\langle x^4, x^3y^3,xy^4,y^5\rangle$, and  
	 let $Q = \langle x^5,y^5\rangle$.  This is illustrated in 
 Figure \ref{f:nineexample}. 
Then the map $f\colon \Mon(M) \to \Mon(N)$ defined by 
$f(y^4)= x^4$, $f(x^3y^2)=x^4y$, and $f(m)=m$ otherwise is 
an arrow map.  

However,  $(Q:M)=\langle x^5, x^2y, y^3 \rangle$ and
$(Q:N)=\langle x^4, x^2y, xy^2, y^5 \rangle$.  There is no arrow map
$g\colon (Q:M)\to (Q:N)$.  If there were an arrow map $g$, by
$\eqref{e:bijectivedecreasing}$ we would have $g(y^3)=xy^2$ and
$g(x^2y)=x^2y$. Then $\eqref{e:mshorter}$ applied to $y^3$ implies
that $g(y^4)=xy^3$ and $g(xy^3)=x^2y^2$, a contradiction to
$\eqref{e:mprimeshorter}$ applied to $x^2y$.  Note however that the map
given by $g(y^3)=xy^2, g(x^2y)=x^2y, g(y^4) = xy^3, g(xy^3) = x^4$ is
a \emph{system of arrows} in the sense of \cite{EvainSchubert}; see
Remark~\ref{r:incidence}.  Compare \cite[Section 4]{Yameogo1},
\cite[Section 5]{EvainSchubert}.
\begin{figure}
	\caption{\label{f:nineexample} The arrow maps and systems of arrows of Example \ref{ex:all}.}
		\includegraphics[scale=1]{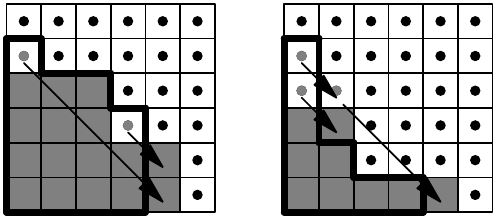}
	\end{figure}
\end{example}

\section{The Hilbert scheme of points in the plane}

In this section we discuss in more detail the case of the Hilbert
scheme $\Hilb^d(\mathbb A^2)$ of $d$ points in the plane.  In this
case the vertices of the $T$-graph correspond to partitions of $d$.

\subsection{The basic structure of the $T$-graph of $\Hilb^d(\AA^2)$.}
As explained in Corollary~\ref{c:reductionstep} and Remark~\ref{r:tangentweights}, the $T$-graph of
$\Hilb^{d}(\AA^2)$ decomposes as a union of $T$-graphs of finitely
many different $\Hc(H)$ where $\c \in \ZZ^2$ with $\mathbf{c}^+,
\mathbf{c}^- \neq 0$, and $H \colon \ZZ^2/\mathbb Z \c \to \mathbb N$
is a Hilbert function.  In this situation, $\Hc(H)$ is smooth and
irreducible; see \cite{EvainIrreducible, Iversen72,
  MaclaganSmithHilbert}.  This is not true when $S$ has more than two
variables.

Up to sign we have ${\c} = (\beta,-\alpha)$, where $\alpha, \beta \in
\mathbb Z_{>0}$ are relatively prime.  Thus $S=\K[x,y]$ has a $\mathbb
Z$-grading by $\deg(x)=\alpha, \deg(y) = \beta$, and $\Hc(H)$
consists of all ideals that are homogeneous with respect to this
positive grading and that have Hilbert function $H$.

   Recall from Definition~\ref{d:EMN} that for monomial ideals $M,N$
   in $\Hc(H)$ the edge-scheme $E(M,N)$ of one-dimensional torus
   orbits connecting $M$ and $N$ is given by $C_{\prec}(M) \cap
   C_{\prec^{\opp}}(N)$.  Note that $E(M,N)$ is is empty unless $M>N$
   in the partial order of Definition~\ref{d:posetorder}.

For the remainder of this section, we let  
$\prec$ denote the lexicographic  term order 
with $x\prec y$ and 
$\prec^{\opp}$ denote the lexicographic term order 
with $y\prec x$.

\begin{definition}\label{d:significant}
Fix a $\mathbb Z^2/\mathbb Z \mathbf{c}$-grading,  
and let $M\subset \K[x,y]$ be a monomial ideal of finite length. 
Let 
$m_0 = x^{a_0}\prec m_1=x^{a_1}y^{b_1} \cdots \prec m_e 
=y^{b_e}$ be the minimal generators of $M$ and let
$r=x^{\beta}y^{-\alpha}.$

Let $w_i = \lcm(m_{i-1},m_{i})$.  A \emph{positive significant arrow}
is a pair $c_i^{\ell} = (m_i, \ell)$, where $\ell \in \ZZ_{>0}$, such
that the monomial $m_ir^{\ell}$ is a monomial not in $M$, and such that $w_ir^{\ell} \in M$.  A
\emph{negative significant arrow} 
is a pair $c_i^{\ell} = (m_i,
\ell)$, where $\ell \in \ZZ_{<0}$, such that the monomial $m_{i+1}r^{\ell}$ is a monomial not in $M$ and 
$w_{i+1}r^{\ell} \in M$.  We denote by
$T_+(M)$ the set of positive significant arrows, and by $T_-(M)$ the
set of negative significant arrows.  
Note that $w_i = x^{a_{i-1}}y^{b_i}$, and the condition that $m_ir^l$ is a
monomial means that $\ell \alpha \leq b_i$.
\end{definition}

\begin{definition} \label{d:IM}
To every monomial $m\in M$  we associate a 
minimal generator $m_{j(m)}$ of $M$, where 
\begin{equation} \label{eq:jdefn}
j(m)  = \max\{ j \mid m_j 
\text{ divides } m \}.\end{equation}
Let $f_0 = m_0$ and define recursively 
\[f_{i} = \frac{m_{i}}{m_{i-1}}f_{i-1} + \sum_{c_{i}^{\ell}
\in T_+(M)} c_{i}^{\ell}\frac{m_{i}r^{\ell}}
   {m_{j(w_ir^{\ell})}}f_{j(w_ir^{\ell})} \in \K[T_+(M)][x,y] ,\]
   where we abuse notation by identifying the significant arrow
   $c_i^{\ell}$ with the corresponding variable.  That $f_i$ is a
   polynomial (as opposed to merely a Laurent polynomial) follows by
   induction.
  Let $$I_{\prec}(M) =
   \langle f_0,\ldots, f_e\rangle \subset \K[T_+(M)][x,y].$$
\end{definition}

\begin{example}\label{ex:runningex}
	Let $\K[x,y]$ have the standard grading $\deg(x) =
        \deg(y) = 1$. Then $r=xy^{-1}$.  Let $M =
        \langle x^8, x^5y, x^3y^3,y^4\rangle$.  Then $m_0 = x^8, m_1 =
        x^5y, m_2 =x^3y^3$ and $m_3 = y^4$, and we have $w_1 = x^8y,
        w_2 = x^5y^3$, and $w_3= x^3y^4$.  The positive significant arrows are
        $T_+(M) = \{c_1^1, c_2^1, c_2^3, c_3^1, c_3^2, c_3^3\}$.  This
        is illustrated in Figure~\ref{f:runningeg}.  The polynomials
        are
	\begin{eqnarray*}
		f_0 &=& x^8 \\
		f_1 &=& x^5y + c_1^1x^6\\
		f_2 &=& x^3y^3+(c_1^1+c_2^1)x^4y^2 +
		(c_2^1c_1^1)x^5y + c_2^3x^6 \\
		f_3 &=& y^4+(c_1^1+c_2^1+c_3^1)xy^3 +
		(c_2^1c_1^1+c_3^1c_1^1+c_3^1c_2^1+c_3^2)x^2y^2\\
		&&+(c_2^3+c_3^1c_2^1c_1^1+c_3^2c_1^1+c_3^3)x^3y
		+(c_3^1c_2^3+c_3^3c_1^1)x^4.\\
	\end{eqnarray*}
\begin{figure}
\caption{\label{f:runningeg}
        The positive significant arrows for the monomial ideal
	$\langle  x^8, x^5y, x^3y^3,y^4\rangle$ of 
	Example \ref{ex:runningex}. 
	Here $c_1^1$ is green, 
	$c_2^1, c_2^3$ are blue, and $c_3^1, c_3^2, c_3^3$ are 
	red. }
\includegraphics{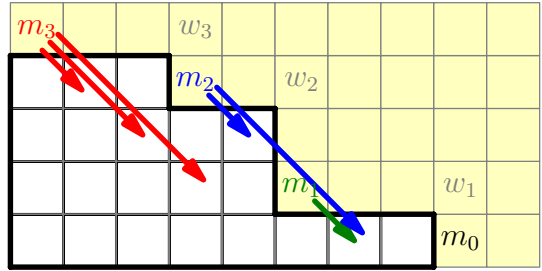}
\end{figure}
\end{example}

\begin{remark} \label{r:sigarrows}
The set of
significant arrows of $M$, for all possible (not necessarily positive)
gradings $\ZZ^2/\ZZ\c$, is in bijection with the weights of the torus
action on the tangent space $T_M\Hilb^d(\AA^2)$ to $M$ induced by the
action of $T$ on $\Hilb^d(\AA^2)$. 
The map that associates to a
significant arrow $c_i^{\ell}$ for the grading $\ZZ/\ZZ\c$ the weight
$\ell\c$ is a bijection.  
See for example \cite[Proposition 5.7]{Nakajimabook},
\cite[Proposition 2.4]{HaimanQTCatalan}, or \cite[\S
  2]{EvainIrreducible}.
	In particular, the gradings by $\ZZ^n/\ZZ\c$  
	for which $\Hc(H)$  is not simply the point $\{M\}$,
	where $H$ is the Hilbert function of $M$, are those 
	for which there exists some 
        significant arrow for $M$; compare Remark 
	\ref{r:tangentweights}.
\end{remark}

In \cite[Section 3]{EvainIrreducible}, Evain gives the 
following 
parametrization of the cells $C_{\prec}(M)$ which
we will use to compute equations for the edge-scheme $E(M,N)$.

\begin{theorem}\label{t:Evain}
We have
	\begin{enumerate}
\item   The set $\{f_0, \ldots, 
	f_e\}$ is a Gr{\"o}bner basis of $I_{\prec}(M)$ with respect to $\prec$  
(\cite[Proposition 10]{EvainIrreducible}).
\item \label{t:Evain2} The map $\mathbb A^{|T_+(M)|} \to \Hc(H)$ induced by 
	$I_{\prec}(M)$ is an isomorphism onto the affine cell $C_{\prec}(M)$
(\cite[Theorem 11]{EvainIrreducible}).
\end{enumerate}
\end{theorem}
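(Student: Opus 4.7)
\textbf{Plan for proof of Theorem \ref{t:Evain}.}

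For part (1), the plan is to apply Buchberger's criterion to $\{f_0, \ldots, f_e\}$. I would first verify, by induction on $i$, that $\inn_\prec(f_i) = m_i$: in the recursive formula of Definition~\ref{d:IM}, the principal term $\frac{m_i}{m_{i-1}} f_{i-1}$ has leading term $m_i$ by the induction hypothesis, while each correction $c_i^\ell \frac{m_i r^\ell}{m_{j(w_i r^\ell)}} f_{j(w_i r^\ell)}$ has leading term $m_i r^\ell$, which lies strictly below $m_i$ in the lex order with $x \prec y$ because $\ell > 0$ strictly decreases $y$-degree. For S-pairs $S(f_i,f_j)$ with $|i-j|\geq 2$, Buchberger's chain criterion applies: for any $i<k<j$ the generator $m_k = x^{a_k}y^{b_k}$ satisfies $a_j\leq a_k\leq a_i$ and $b_i\leq b_k\leq b_j$, so $m_k$ divides $\lcm(m_i,m_j)$. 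The essential case is adjacent S-pairs; substituting the recursion for $f_i$ into $\frac{w_i}{m_{i-1}}f_{i-1}-\frac{w_i}{m_i}f_i$ yields exactly
\[
S(f_{i-1}, f_i) \;=\; -\sum_{c_i^\ell \in T_+(M)} c_i^\ell \frac{w_i r^\ell}{m_{j(w_i r^\ell)}}\, f_{j(w_i r^\ell)},
\]
a standard representation certifying that the S-pair reduces to zero, since each term has leading monomial $w_i r^\ell \prec w_i$. The very definition of $f_i$ was arranged so that this cancellation holds identically.

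For part (2), I would proceed in three steps. First, well-definedness: each $f_i$ is homogeneous with respect to the $\ZZ^2/\ZZ\c$-grading, because $\deg(r)=0$ and every significant-arrow correction multiplies monomials of the same degree as $m_i$. Hence any specialization $I_\prec(M)|_c$ lies in $\Hc(H)$ and, by part (1), has initial ideal $M$, so it lies in $C_\prec(M)$. Second, to prove the map is a bijection on $\K$-points, take any $I\in C_\prec(M)$ and form its reduced Gr\"obner basis $g_i = m_i + \sum_\ell d_{i,\ell}\, m_ir^\ell$, where $\ell$ ranges over those positive integers for which $m_i r^\ell$ is a standard monomial of $M$. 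Order the significant arrows $c_i^\ell$ lexicographically in $(i,\ell)$; then the coefficient of $m_i r^\ell$ in $f_i$ equals $c_i^\ell$ plus a polynomial in $\{c_j^{\ell'}: j<i,\text{ or } j=i\text{ and }\ell'<\ell\}$ (as illustrated by Example~\ref{ex:runningex}, where the coefficient of $x^4y^2$ in $f_2$ is $c_2^1 + c_1^1$). This triangular, unit-diagonal system can be solved uniquely for the tuple $(c_i^\ell)$ matching the $(d_{i,\ell})$. Since both the forward map and its triangular inverse are polynomial, the bijection upgrades to an isomorphism of affine schemes.

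The principal obstacle is the combinatorial bookkeeping at two places. First, in the adjacent S-pair reduction one must check that substituting the recursion produces exactly the sum over $c_i^\ell\in T_+(M)$ with no extraneous terms; this is precisely what the index function $j(\cdot)$ of \eqref{eq:jdefn} is engineered to track, and ruling out cancellations or divisibility failures requires the hypothesis $w_i r^\ell\in M$ from Definition~\ref{d:significant}. Second, dually, for surjectivity one must argue that $|T_+(M)|$ equals the dimension of the Gr\"obner stratum $C_\prec(M)$; this matches the Bia{\l}ynicki-Birula count from Remark~\ref{r:sigarrows}, but verifying it directly requires exhibiting a lexicographic ordering on the significant arrows in which the recursion's correction terms never introduce a variable later than $c_i^\ell$ itself, so that the triangular inversion truly closes.
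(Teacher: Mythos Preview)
The paper does not prove this theorem; it is quoted from \cite{EvainIrreducible}. So there is nothing in the paper to compare against, and I evaluate your plan on its own merits. Part~(1) is correct: the recursion for $f_i$ is arranged exactly so that $S(f_{i-1},f_i)$ admits the displayed standard representation with each summand's leading monomial $w_ir^\ell\prec w_i$, and the chain criterion handles non-adjacent pairs.

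The gap is in part~(2), in the surjectivity direction. The reduced Gr\"obner basis of an $I\in C_\prec(M)$ carries a coefficient $d_{i,\ell}$ for every $\ell$ with $m_ir^\ell$ standard, while the significant arrows are only those pairs that \emph{also} satisfy $w_ir^\ell\in M$; these index sets can differ. For $M=\langle x^6,x^2y,xy^2,y^4\rangle$ with $\mathbf{c}=(1,-1)$ there are three pairs $(1,1),(2,2),(3,4)$ with $m_ir^\ell$ standard but only one significant arrow $c_1^1$, since $w_2r^2=x^4$ and $w_3r^4=x^5$ are both standard. Your triangular inversion recovers $(c_i^\ell)$ from the significant-arrow $d_{i,\ell}$'s, but does not explain why the \emph{non}-significant $d_{i,\ell}$'s then match, i.e.\ why $I_\prec(M)|_c=I$. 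Invoking the Bia{\l}ynicki--Birula dimension count is not enough on its own: you still need that an injective morphism between two copies of $\mathbb A^n$ is an isomorphism, which requires care (and the paper ultimately wants these statements over an arbitrary base). The clean fix is a direct inductive construction of the inverse: assuming $f_0,\dots,f_{i-1}\in I$, set $S:=\tfrac{w_i}{m_i}h_i-\tfrac{w_i}{m_{i-1}}f_{i-1}\in I$, where $h_i\in I$ is the reduced generator with $\inn_\prec(h_i)=m_i$, and strip off the coefficients of $w_ir^\ell$ for $\ell=1,2,\dots$. When $w_ir^\ell\in M$, define $c_i^\ell$ to be that coefficient and subtract $c_i^\ell\,\tfrac{w_ir^\ell}{m_{j(w_ir^\ell)}}f_{j(w_ir^\ell)}$; when $w_ir^\ell\notin M$, the coefficient is forced to vanish because the running remainder lies in $I$ and a standard monomial cannot be the $\prec$-leading term of a nonzero element of $I$. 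One obtains $f_i=h_i\in I$, completing surjectivity. This step is precisely where the clause $w_ir^\ell\in M$ in Definition~\ref{d:significant} is essential.
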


In the following proposition we use 
arguments from Evain \cite{EvainIrreducible} to imply that $\Hc(H)$ is
the closure of an edge-scheme.

\begin{proposition} \label{t:Hab}
	Let $S = \K[x,y]$ be graded by $\ZZ^2/ \ZZ\c$, where $\c \in
        \ZZ^2$ with $\c^+, \c^-\neq 0$.  Fix  
        a Hilbert function $H$.  Then there exists a unique maximal
        element $M_{\max}$ and a unique minimal element $M_{\min}$
        with respect to the partial order of
        Definition~\ref{d:posetorder} for the monomial ideals
        contained in $\Hc(H)$, and $\Hc(H)$ is the closure of the edge-scheme 
        $E(M_{\max},M_{\min})$.
\end{proposition}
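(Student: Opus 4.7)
The plan is to use the Bia\l ynicki-Birula decomposition of the smooth irreducible variety $\Hc(H)$ together with the affine-cell parametrization of Theorem~\ref{t:Evain}\eqref{t:Evain2}, and then to match the open cells for $\prec$ and $\prec^{\opp}$ to the extrema of the partial order of Definition~\ref{d:posetorder} by invoking the incidence results of Yam\'eogo~\cite{Yameogo1,Yameogo2} and Evain~\cite{EvainIrreducible,EvainSchubert}.

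First I would note that the $T$-action on $\Hc(H)$ factors through a one-parameter quotient $T/T'\cong\K^*$, as in Proposition~\ref{p:torusgrading}, so the cells $\{C_\prec(M)\}$ and $\{C_{\prec^{\opp}}(M)\}$ give Bia\l ynicki-Birula decompositions of $\Hc(H)$. By Theorem~\ref{t:Evain}\eqref{t:Evain2} each $\prec$-cell is affine of dimension $|T_+(M)|$, and smoothness of $\Hc(H)$ gives $\dim T_M\Hc(H)=|T_+(M)|+|T_-(M)|=\dim\Hc(H)$ at every monomial ideal $M$ (see Remark~\ref{r:sigarrows}). Irreducibility of $\Hc(H)$ then forces exactly one $\prec$-cell to be open of dimension $\dim\Hc(H)$, namely $C_\prec(M_{\max})$ for the unique monomial ideal $M_{\max}$ with $T_-(M_{\max})=\emptyset$; symmetrically I would define $M_{\min}$ as the unique fixed point with $T_+(M_{\min})=\emptyset$, so that $C_{\prec^{\opp}}(M_{\min})$ is open and dense.

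Next I would verify that $M_{\max}$ and $M_{\min}$ are the unique maximum and minimum for the partial order of Definition~\ref{d:posetorder}. Density of $C_\prec(M_{\max})$ places each monomial ideal $N\in\Hc(H)$ in $\overline{C_\prec(M_{\max})}$, and I then plan to invoke the incidence analysis of Bia\l ynicki-Birula cells developed by Yam\'eogo and Evain---which relates specialization in the BB stratification to the partial order on fixed points---to conclude $N\leq M_{\max}$. The symmetric argument with $\prec^{\opp}$ gives $M_{\min}\leq N$, and uniqueness of both extrema is immediate from uniqueness of the open cells. Finally, $E(M_{\max},M_{\min})=C_\prec(M_{\max})\cap C_{\prec^{\opp}}(M_{\min})$ is the intersection of two nonempty open dense subsets of the irreducible $\Hc(H)$, hence is itself nonempty, open, and dense; therefore $\overline{E(M_{\max},M_{\min})}=\Hc(H)$.

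The hard step is the partial-order comparison in the middle paragraph. Proposition~\ref{thm:arrowmapnecessary} by itself only produces an arrow map between the two simultaneous initial ideals of a single homogeneous ideal $I$, so to connect $M_{\max}$ to an arbitrary boundary fixed point $N$ one must track such arrow maps along a one-parameter degeneration $(I_t)$ with $I_t\in C_\prec(M_{\max})$ for $t\neq 0$ and $I_0=N$, and extract in the limit a degree-preserving bijection $\Mon(M_{\max})\to\Mon(N)$ with $m\succeq f(m)$. This is precisely the combinatorial specialization analysis carried out in~\cite{EvainIrreducible}, which the paper flags as the key ingredient being imported in the sentence preceding Proposition~\ref{t:Hab}.
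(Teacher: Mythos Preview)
Your overall strategy is correct and reaches the conclusion, but it takes a more circuitous route than the paper and your last paragraph muddles what actually needs to be argued.

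The paper proceeds in the opposite order from you. It \emph{begins} by citing Evain~\cite[Theorem~19]{EvainIrreducible}, which directly asserts that the poset of Definition~\ref{d:posetorder} has a unique minimal element $M_{\min}$ and that this element satisfies $T_+(M_{\min})=\emptyset$. From there, smoothness gives $|T_-(M_{\min})|=\dim\Hc(H)$, Theorem~\ref{t:Evain}\eqref{t:Evain2} (applied with $x,y$ swapped) gives $\dim C_{\prec^{\opp}}(M_{\min})=\dim\Hc(H)$, and local closedness plus irreducibility makes this cell open and dense. The symmetric argument produces $M_{\max}$, and the intersection of two dense opens is dense. No separate ``partial-order comparison'' step is needed, because Evain's theorem already packages the poset extremality together with the vanishing of positive arrows.

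Your route---first identifying the unique open BB cell geometrically, then arguing its fixed point is the poset extremum---is valid, but the justification you need for that second step is simply Yam\'eogo's result (recorded in Remark~\ref{r:incidence}) that weak incidence $N\in\overline{C_\prec(M)}$ implies $N\leq M$. Your middle paragraph invokes this correctly. Your final paragraph, however, suggests you would instead track arrow maps through a one-parameter degeneration and extract a limiting bijection; this is both harder than necessary and not how the cited references actually proceed. You should drop that discussion and simply cite Yam\'eogo, or---more efficiently---reorganize to match the paper and cite \cite[Theorem~19]{EvainIrreducible} at the outset, which eliminates the ``hard step'' entirely.
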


\begin{proof}
Evain \cite[Theorem 19]{EvainIrreducible} shows 
that the poset of
Definition~\ref{d:posetorder} on the monomial ideals contained in
$\Hc(H)$ has a unique minimal element $M_{\min}$ such that
$T_+(M_{{\min}}) = \emptyset$.  By Remark~\ref{r:sigarrows}, the
number of significant arrows at $M$ equals the dimension of the
tangent space to $\Hc(H)$ at $M$, which equals the dimension since
$\Hc(H)$ is smooth. So since $T_+(M_{{\min}})= \emptyset$, we have
$|T_-(M_{\min})|=\dim \Hc(H)$. 

By switching $x$ and $y$ we switch the roles of $\prec$ and
$\prec^{\opp}$, and of positive and negative signficant arrows.  Thus
Theorem~\ref{t:Evain}(\ref{t:Evain2}) also applies to $\prec^{\opp}$,
and so we have $\dim C_{\prec^{\opp}}(M_{\min}) = \dim \Hc(H)$.

The cells $C_{\prec}(M)$ are locally closed, so
$C_{\prec^{\opp}}(M_{\min})$ is open; see \cite[Theorem 4.2]{BCM}.
  Since $\Hc(H)$ is irreducible \cite{EvainIrreducible}, it follows
  that $C_{\prec^{\opp}}(M_{\min})$ is an open dense subset of
  $\Hc(H)$.  Similarly, we obtain a
  maximal element $M_{\max}$ such that $C_{\prec}(M_{\max})$ is an
  open dense subset of $\Hc(H)$. Hence the intersection
  $C_{\prec}(M_{\max}) \cap C_{\prec^{\opp}}(M_{\min})$ is an open
  dense subset of $\Hc(H)$, which implies the claim.
\end{proof}
\begin{remark}
Our proof uses the smoothness of $\Hc(H)$ in
this setting, and it would be interesting to know if this closure
property is true in more than two variables, where the smoothness may
fail.  It also suggests studying the restricted graph of just edges
whose closure is the entire $\Hc(H)$.
\end{remark}

\begin{remark}
	In the standard grading $\deg(x) = \deg(y)=1$, Macaulay's
        theorem asserts that the maximal element of the poset is the
        lex-segment ideal with Hilbert function $H$, which is the
        monomial ideal containing the $d+1-H(d)$ largest elements of
        degree $d$ for every $d$; see, for example,  \cite[Theorem
          4.2.10]{BrunsHerzog}.  This is not true for more general
        gradings.  However, Evain gives a recursive construction of
        $M$ in \cite[Remark 23]{EvainIrreducible}; see also
        \cite[Proposition 3.12]{MaclaganSmithHilbert}.
\end{remark}

\begin{remark}\label{r:incidence}
Yam\'eogo \cite{Yameogo1, Yameogo2} and Evain \cite{EvainSchubert}
studied a related incidence question in the case of
$\Hilb^{d}(\AA^2)$. In \cite{Yameogo1} Yam\'eogo shows that the
closure of a cell $C_{\prec}(M)$ need not be a union of cells.  Given
two cells $C_{\prec}(M)$, $C_{\prec}(M')$, one may ask whether
$C_{\prec}(M') \subset \overline{C_{\prec}(M)}$ (strong incidence) or
$C_{\prec}(M') \cap \overline{C_{\prec}(M)} \neq \emptyset$ (weak
incidence), and in \cite{Yameogo2} Yam\'eogo gives a sufficient
condition for strong incidence.  In \cite{Yameogo1} he shows that
being related in the partial order of Definition~\ref{d:posetorder} is
a necessary, yet not sufficient, condition for weak incidence.
Evain strengthens this condition in \cite{EvainSchubert} as follows.
A \emph{system of arrows} is a monomial map $f:\Mon(M) \to \Mon(N)$
satisfying conditions \ref{e:bijectivedecreasing} and
\ref{e:mprimeshorter} in Definition~\ref{d:arrowmap}; compare
\cite[Definition-Proposition 11]{EvainSchubert}.  Evain's systems of
arrows \cite[Definition 4]{EvainSchubert} are defined to be maps on
the partitions, so they go from monomials not in $M$ to monomials not
in $N$. However, Evain's system of arrows is  equivalent to a 
system of arrows in this
sense from $\Mon(Q \colon M)$ to $\Mon(Q \colon N)$.

In \cite[Theorem
  8]{EvainSchubert}, Evain proves that if $C_{\prec}(N)\cap
\overline{C_{\prec} (M)}\neq \emptyset$ and $Q$ is as in
Theorem~\ref{thm:main}, then there exists a system of arrows $\Mon(M)
\to \Mon(N)$ and a system of arrows $\Mon(Q\colon M) \to \Mon(Q\colon
N)$.  No example is known where this condition is not
sufficient.  Note that in Example \ref{ex:all}, the map given by
$g(y^3)=xy^2, g(x^2y)=x^2y, g(y^4) = xy^3, g(xy^3) = x^4$ is a system
of arrows between $(Q:M)$ and $(Q:N)$, but there is no arrow map.

The existence of a one-dimensional torus orbit between $M$
and $N$ implies that $N \in \overline{C_{\prec}(M)}$ and $M \in
\overline{C_{\prec^{\opp}}(N)}$, so Evain's theorem \cite[Theorem
  8]{EvainSchubert} implies the existence of a system of arrows from
$M \to N$ and $(Q\colon M) \to (Q\colon N)$ with respect to $\prec$ as
well as $\prec^{\opp}$. 
\end{remark}
\subsection{A combinatorial description of the edge ideal}
\label{s:utahequations}

In this section we prove Theorem~\ref{t:combinatorial}, by giving an
explicit combinatorial description of the equations for the edge
scheme $E(M,N)$ over $\mathbb Z$.  The algorithm in \cite[Algorithm
  5]{AltmannSturmfels} for computing this edge works by
finding Gr{\"o}bner bases for $I_{\prec}(M)$ and $I_{\prec^{\opp}}(N)$
and reducing the Gr{\"o}bner basis for $I_{\prec^{\opp}}(N)$ modulo
the Gr{\"o}bner basis $I_{\prec}(M)$.  We now apply this algorithm to
Evain's Gr{\"o}bner basis to combinatorially describe equations for
$E(M,N)$, which involves the following combinatorial constructions.

	 For this section we fix  a $\mathbb Z^2/\mathbb Z \mathbf{c}$-grading,  
	  and a monomial ideal $M \subset \K[x,y]$.   
The following
definition, which uses the notation of Definition \ref{d:significant}, defines the
combinatorial objects we will use to give equations for the $E(M,N)$.
\begin{definition}
	\begin{enumerate}
		\item 
			A \emph{path} 
	 from a generator $m_i$ of $M$ 
	 is a sequence of  arrows 
	 $\cP = (c_{i_1}^{\ell_1}, \ldots, c_{i_d}^{\ell_d})$
	 where $c_{i_k}^{\ell_k}\in T_+(M)$, 
	 defined inductively as follows:

	 \begin{enumerate}
		 \item If $c_i^{\ell} \in T_+(M)$,
			 then $(c_i^{\ell})$ is a path 
			 from $m_i$. 
		 \item Otherwise, either 
			 \begin{itemize}	
		 \item[$(\dag)$] $(c_{i_1}^{\ell_1}, \ldots,
                   c_{i_d}^{\ell_d})$ is a path from $m_{i-1}$, or
				 \item[$(\ddag)$] $i_1 = i$,
                                   and $(c_{i_2}^{\ell_2}, \ldots,
                                   c_{i_d}^{\ell_d})$ is a path from
                                   $m_{j(w_1r^ \ell)}$.
			 \end{itemize}
			 \end{enumerate}

We define the \emph{length} of the path $\cP$ to be $\ell(\cP) =
\ell_1+ \cdots + \ell_d$, and we say that $\cP$ is a path from from
$m_i$ to $m_i r^{\ell(\cP)}$.  The construction of a path guarantees
that $m_ir^{\ell(\cP)}$ is a monomial; when $d=1$ this is part of the
definition of a significant arrow.

		 \item 
	A \emph{walk} from a generator $m_i$ of $M$ to a monomial $s$
        is defined to be a sequence of paths $(\cP_1, \ldots, \cP_d)$,
        such that $\cP_1$ is a path from $m_i$,
        $\cP_{k}$ is a path from $m_{j\left(m_ir^{\ell(\cP_1)+ \cdots
            \ell(\cP_{k-1})}\right)}$ for $2\leq k\leq
        d$, 
        $m_ir^{\ell(\cP_1)+\cdots + \ell(\cP_k)} \in M$ for $1\leq
        k\leq d-1$, 
and $m_ir^{\ell(\cP_1)+\cdots + \ell(\cP_d)} =s$.  We define the \emph{length} of the walk to be $\ell(\cW)
        = \ell(\cP_1) + \cdots + \ell(\cP_d)$.

		\item 
	A \emph{stroll} from a monomial $m \in M$ to a standard
        monomial $s$ is a sequence of walks $\cS = (\cW_1, \ldots,
	\cW_d)$, such that $\cW_1$ is a walk from $m_{j(m)}$,
$\cW_k$ is a walk
        from $m_{j\left(mr^{\ell\left(\cW_1\right) + \cdots +
            \ell\left(\cW_{k-1}\right)}\right)}$ for $2\leq k\leq d$, 
        $mr^{\ell\left(\cW_1\right) + \cdots +
          \ell\left(\cW_{k-1}\right)} \in M$, and 
        $mr^{\ell\left(\cW_1\right) + \cdots +
          \ell\left(\cW_{d}\right)}=s$.
        There is also the trivial stroll from a standard monomial $s$
        for $M$ to itself.
\item 
\label{d:hike}
	Let $H$ be a Hilbert function, and let $M, N$ be monomial
        ideals in $\Hc(H)$ such that $M>N$ in the partial order of
        Definition~\ref{d:posetorder}.  A \emph{hike} $\cH$ from a
        minimal generator $n$ of $N$ to a standard monomial $s$ for
        $M$ is a pair $(\cP', \cS)$, where $\cP'$ is  a path for $N$ with
        respect to $\prec^{\opp}$ from $n$ to some monomial $m\succ n$
        or $\cP'$ is an  arrow of length zero, in which case we set $m=n$, and
        $\cS$ is a stroll from $m$ to $s$.
	\end{enumerate}

For a path $\cP$, we set $a_{\cP} = \prod _{c_i^{\ell} \in \cP}
c_i^{\ell} \in \K[T_+(M)]$.  For a
walk $\cW$, we set $a_{\cW} = (-1)^{|\cW|+1}\prod_{\cP\in \cW} a_{\cP}$. For a
stroll $\cS$ we let $a_{\cS} = \prod _{\cW\in \cS}(-1)^{|\cS|} a_{\cW}
$  and for the trivial stroll $\cS$ we let $a_{\cS} = 1$.
For a hike $\cH = (\cP', \cS)$, we let $a_{\cH} =
a_{\cP'}a_{\cS}$, and if $\cP'$ is the arrow of length zero we set $a_{\cP'}= 1$. Note that then  $a_{\cP}, a_{\cW}, a_{\cS} \in
\K[T_+(M)], $ while $a_{\cH}\in
\K[T_+(M),T_-(N)]$.
\end{definition}

We will see in the lemmas below that the notion of a path naturally
comes from the recursive definition of the $f_i$ from
Definition~\ref{d:IM}, the notion of a walk from computing the reduced
Gr{\"o}bner basis for $I_{\prec}(M)$, and the notion of a stroll from
reducing a monomial with respect to the reduced Gr{\"o}bner basis.

\begin{example}
	\label{ex:thebigexample}
\begin{figure}
\caption{\label{f:pathsetc}
The upper left picture shows the path $c_2^1c_1^1$ from 
$m_2$ and the paths $c_3^3c_1^1$ and $c_3^1c_2^1c_1^1$ from 
$m_3$; the upper right picture shows  
the walk $c_2^1c_1^1c_1^1$ from $m_2$; the lower left the stroll
$c_3^1c_2^1c_2^1c_1^1c_1^1$ from $xy^5$; the lower right the 
hike $\widetilde{c}_1^4 c_3^1c_2^1c_2^1c_1^1c_1^1$ from 
$x^5y$ to $x^6$.}
\includegraphics[scale=0.8]{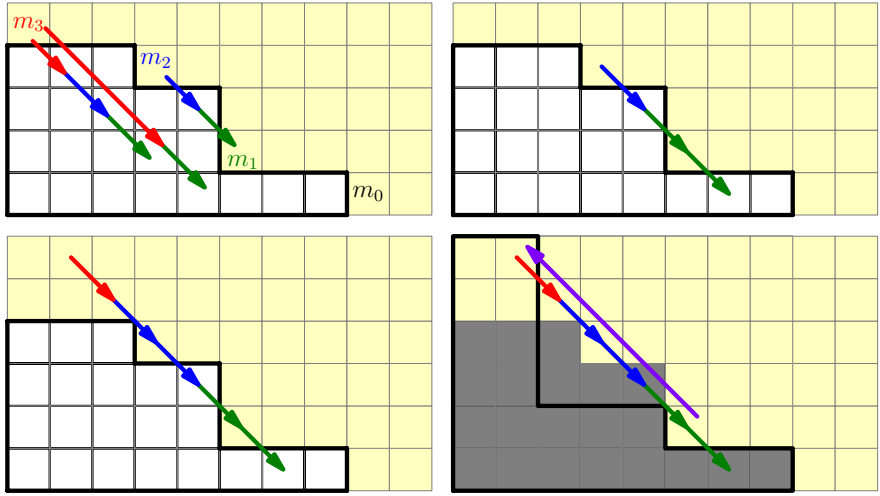}

\end{figure}
	We illustrate the concepts of paths, walks, and strolls
        on the ideal 
       $M= \langle x^8, x^5y, x^3y^3,y^4\rangle$
of Example \ref{ex:runningex}.  This is shown in
        Figure \ref{f:pathsetc}.  It is convenient to describe paths,
        walks, strolls, and hikes as terms of polynomials in
	$\K[T_+(M)]$, where the path $\cP$ is represented by the term 
	$a_{\cP}$. There are no paths from $m_0$.  The only path
        from $m_1$ is $c_1^1$, the paths from $m_2$ are the terms of $
        c_2^1+c_1^1+c_2^1c_1^1+c_2^3$, and the paths from $m_3$ are
        the terms of $ c_3^1+ c_1^1+c_2^1+
        c_3^2+c_2^1c_1^1+c_3^1c_1^1+c_3^1c_2^1+
        c_3^3+c_2^3+c_3^1c_2^1c_1^1+c_3^2c_1^1+ c_3^1c_2^3+c_3^3c_1^1
        $. 
        For $i=0,1,3$, a walk from $m_i$ is simply a
        path from $m_i$.  The walks from $m_2$ are the paths from
        $m_2$ as well as $(c_2^1c_1^1,c_1^1)$.

        The only  stroll from 
	$x^5y$ to $x^6$ is $c_1^1$. There is only the 
	trivial stroll from $x^4y^2$. The strolls from 
	$x^3y^3$ to $x^6$ are 
	$c_2^3$, and $c_2^1c_1^1c_1^1$. 
	For the more complicated strolls it is convenient 
	to define $W_m^{\ell}$ to be the polynomial 
	whose terms are walks from $j(m)$ of length $\ell$
	(the $\ell$-homogeneous part of the walk 
	polynomials computed above),
	and to define $S_{m,s}$ to be the polynomial whose terms are strolls 
	from $m$ to $s$. 
	Then $S_{x^2y^4,x^6} = W_{x^2y^4}^{4}+W_{x^2y^4}^{3}
	S_{x^5y,x^6} + W_{x^2y^4}^{1}S_{x^3y^3,x^6} 
=2c_3^1c_2^3+c_3^3c_1^1+2c_2^3c_1^1+2c_3^1c_2^1(c_1^1)^2+c_3^2(c_1^1)^2+c_2^1c_2^3+c_2^1(c_1^1)^3+(c_2^1)^2(c_1^1)^2	
	$
	and 
	\begin{eqnarray*}
	S_{xy^5,x^6} &=& W_{xy^5}^{4}S_{x^5y,x^6}+
	W_{xy^5}^{2}S_{x^3y^3,x^6}+W_{xy^5}^{1}S_{x^2y^4,x^6} \\
	&=& 6c_3^1c_2^3c_1^1+c_3^2c_2^3+3c_2^3c_2^1c_1^1+
	4c_3^1c_2^1c_2^3+2c_3^2c_2^1(c_1^1)^2\\&&+3(c_2^1)^2(c_1^1)^3
+	4c_3^1c_2^1(c_1^1)^3+2c_3^1(c_2^1)^2(c_1^1)^2+2(c_3^1)^2c_2^3\\
&&+c_3^1c_3^3c_1^1+2(c_3^1)^2c_2^1(c_1^1)^2+c_3^1c_3^2(c_1^1)^2+
2c_2^1c_3^3c_1^1\\&&+2c_3^1(c_2^1)^2(c_1^1)^2+(c_2^1)^2c_2^3+(c_2^1)^3(c_1^1)^2+c_3^3(c_1^1)^2\\&&+2c_2^3(c_1^1)^2+c_3^2(c_1^1)^3+c_2^1(c_1^1)^4.
\end{eqnarray*} 
\end{example}
        Let \[\cM_i = \{m \text{ a monomial } \mid m\prec m_i \text{ and }
	\deg(m) = \deg(m_i)\},\] 
where $m_i$ is as in Definition~\ref{d:significant}.
\begin{lemma}
	\label{prop:evainbasis}
        We have 
	\[ f_i =  m_i + \sum_{m\in \cM_i}\left( \sum_{\substack{\cP
	\text{ path from } \\m_i \text{ to }m} 
	} a_{\cP}\right) m.\]
\end{lemma}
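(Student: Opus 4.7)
The plan is to proceed by induction on $i$. For $i=0$, we have $f_0 = m_0$, and $\cM_0=\emptyset$ because any monomial of degree $\deg(m_0)$ other than $m_0=x^{a_0}$ has strictly positive $y$-exponent, hence is $\succ m_0$ in the lex order with $x \prec y$. Moreover there is no $c_0^\ell$ in $T_+(M)$ (no $m_{-1}$, hence no $w_0$), so there are no paths from $m_0$, and the formula reduces to $f_0=m_0$.

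For the inductive step, I would first check that the indices appearing in the recursion of Definition~\ref{d:IM} are strictly less than $i$: for every $c_i^\ell \in T_+(M)$, the $y$-exponent of $w_i r^\ell$ is $b_i-\alpha\ell$, which is strictly less than $b_i \leq b_k$ for any $k\geq i$, so no such $m_k$ divides $w_i r^\ell$ and therefore $j(w_i r^\ell)<i$. Hence the inductive hypothesis applies to $f_{i-1}$ and to each $f_{j(w_i r^\ell)}$. Substituting $f_{i-1} = m_{i-1}+\sum_{\cP} a_{\cP}\, m_{i-1}r^{\ell(\cP)}$ yields $\frac{m_i}{m_{i-1}}f_{i-1}=m_i+\sum_{\cP} a_{\cP}\, m_ir^{\ell(\cP)}$, where $\cP$ runs over paths from $m_{i-1}$; these are precisely the case $(\dag)$ paths from $m_i$. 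Similarly, expanding the $c_i^\ell$ summand gives $c_i^\ell\, m_ir^\ell+\sum_{\cP'} a_{(c_i^\ell,\cP')}\, m_ir^{\ell+\ell(\cP')}$, which contributes the length-one base-case path $(c_i^\ell)$ together with the case $(\ddag)$ paths $(c_i^\ell,\cP')$. Since a path $\cP=(c_{i_1}^{\ell_1},\dots)$ from $m_i$ arises from case $(\dag)$ exactly when $i_1<i$, and from case $(\ddag)$ or from the base case exactly when $i_1=i$, these two contributions are disjoint and together exhaust all paths from $m_i$. Grouping by the endpoint $m_ir^{\ell(\cP)}$ gives the formula.

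The main obstacle is to verify that $m_ir^{\ell(\cP)}$ is a genuine monomial lying in $\cM_i$; this is also the content of the parenthetical remark in Definition~\ref{d:IM} that $f_i$ is a polynomial rather than merely a Laurent polynomial. For case $(\dag)$, a path $\cP$ from $m_{i-1}$ of length $\ell=\ell(\cP)>0$ satisfies $\alpha\ell\leq b_{i-1}<b_i$, hence $b_i-\alpha\ell\geq 0$; together with $a_i+\beta\ell\geq 0$, this shows $m_ir^\ell=x^{a_i+\beta\ell}y^{b_i-\alpha\ell}$ is a monomial. For case $(\ddag)$ paths $(c_i^{\ell_1},\cP')$, the first factor $m_ir^{\ell_1}$ is a monomial by the definition of a significant arrow, and monomiality of the concatenation follows by induction on the inner path. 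In every case the $y$-exponent of $m_ir^{\ell(\cP)}$ is strictly smaller than $b_i$, so $m_ir^{\ell(\cP)}\prec m_i$ and lies in $\cM_i$, completing the induction.
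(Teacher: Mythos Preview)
Your proof is correct and follows essentially the same inductive approach as the paper: the base case $f_0=m_0$, and for the inductive step matching the term $\tfrac{m_i}{m_{i-1}}f_{i-1}$ with the type-$(\dag)$ paths and the sum over $c_i^\ell$ with the type-(a) and type-$(\ddag)$ paths. The paper's proof is terser; you additionally verify that $j(w_ir^\ell)<i$ so that the induction hypothesis applies, and you justify that each endpoint $m_ir^{\ell(\cP)}$ is a genuine monomial in $\cM_i$, which the paper leaves implicit.
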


\begin{proof}
	We proceed by induction on $i$. For $i=0$, we have $\cM_i =
        \emptyset$ and $f_0 = m_0$.  The coefficient of $m$ in $f_{i}$
        is the coefficient of $\frac{m_{i-1}}{m_{i}}m$ in $f_{i-1}$
        plus the sum over all significant positive arrows
        $c_{i}^{\ell}$ originating at $m_i$ of $c_{i}^{\ell}$ times
        the coefficient of $\frac{m_{j(w_ir^{\ell})}}{m_{i}r^{\ell}}m$
        in $f_{j(w_ir^{\ell})}$.  By induction, the former corresponds
        to the paths of the form b) $(\dag)$, and the latter to the paths
        of the form b) $(\ddag)$ and a).
\end{proof}

\begin{lemma} \label{l:reducedbasis}
	Let $\cT_i = \{ s \in \cM_i \mid  s\notin M\}$ and 
       \[g_i = m_i + \sum_{s\in \cT_i}\ \left(\sum_{\substack{
       \cW \text{ walk from} \\ m_i \text{ to }s}} a_{\cW}\right) s.\] Then
       $\{g_0, \ldots, g_e\}$ is a reduced Gr{\"o}bner basis for the
       ideal $I_{\prec}(M)$ of Definition~\ref{d:IM}.

\end{lemma}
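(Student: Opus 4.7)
The plan is to show that the $g_i$ arise from the $f_i$ of Lemma~\ref{prop:evainbasis} by applying the standard tail-reduction algorithm that converts a Gr\"obner basis into its reduced form. By Theorem~\ref{t:Evain}(1), $\{f_0,\dots,f_e\}$ is already a Gr\"obner basis for $I_{\prec}(M)$, and by Lemma~\ref{prop:evainbasis} each $f_i$ has leading term $m_i$ with coefficient $1$ and tail monomials in $\cM_i$. Thus to produce the reduced Gr\"obner basis one must only eliminate those tail monomials that lie in $M$, and it remains to check that the result matches the stated formula for $g_i$.

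The elimination step works as follows. Any tail monomial $m\in\cM_i\cap M$ is divisible by $m_{j(m)}$, and reducing the term $a\,m$ using $f_{j(m)}$ replaces it by $-a\,(m/m_{j(m)})\bigl(f_{j(m)}-m_{j(m)}\bigr)$. By Lemma~\ref{prop:evainbasis} this introduces, for every path $\cP$ from $m_{j(m)}$ to some $m'$, the new tail contribution $-a\,a_{\cP}\,(m/m_{j(m)})\,m'$. Every newly produced monomial is strictly $\prec$-smaller than $m$, and since only finitely many monomials of a given degree exist, iterating the procedure terminates with a tail supported on standard monomials.

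I would then match this reduction process to walks by induction on the number of reduction steps. The initial term in $f_i$ arising from a path $\cP_1$ from $m_i$ reaches the monomial $m_i r^{\ell(\cP_1)}$; if it lies in $M$, reduction uses $f_{j(m_i r^{\ell(\cP_1)})}$, and a choice of path $\cP_2$ from $m_{j(m_i r^{\ell(\cP_1)})}$ corresponds to the next term of the resulting expansion. Iterating exactly reproduces the recursive data of a walk $\cW=(\cP_1,\dots,\cP_d)$, with the process terminating precisely when $m_i r^{\ell(\cP_1)+\cdots+\ell(\cP_d)}=s\notin M$, that is, $s\in\cT_i$. The coefficient accumulated along $\cW$ is $a_{\cP_1}\cdot(-a_{\cP_2})\cdots(-a_{\cP_d})=(-1)^{d-1}\prod_{k} a_{\cP_k}=(-1)^{|\cW|+1}\prod_{\cP\in\cW} a_{\cP}=a_{\cW}$. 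Summing over all walks from $m_i$ to a fixed $s\in\cT_i$ yields the stated coefficient of $s$ in $g_i$. Since each $g_i$ has leading term $m_i$ with coefficient $1$ and tail monomials only in $\cT_i$, the set $\{g_0,\dots,g_e\}$ is the reduced Gr\"obner basis.

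The main obstacle I expect is the inductive bookkeeping linking the two recursions: the path recursion in Definition~\ref{d:IM} (with its cases $(\dag)$ and $(\ddag)$) against the walk recursion (which switches to a new path whenever the accumulated monomial lands back in $M$), while tracking coefficients and signs through each branch. Once that matching is made precise, everything else follows from Theorem~\ref{t:Evain}(1), Lemma~\ref{prop:evainbasis}, and the defining property of a reduced Gr\"obner basis.
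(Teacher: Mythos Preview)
Your proposal is correct and follows essentially the same approach as the paper. The paper organizes the tail-reduction by introducing a filtration: ordering the non-standard tail monomials $u_1\succ\cdots\succ u_t$ in $\cM_i\setminus\cT_i$, defining a \emph{$k$-walk} to be a walk whose intermediate landing points all lie in $\{u_1,\dots,u_k\}$, and setting $h_k$ to be the expression built from $k$-walks, so that $h_0=f_i$, $h_t=g_i$, and $h_k$ is obtained from $h_{k-1}$ by subtracting the appropriate multiple of $f_{j(u_k)}$; this is exactly your iterated reduction, just processed monomial-by-monomial in $\prec$-decreasing order rather than term-by-term along each branch, and it makes the sign and coefficient bookkeeping you flag as the main obstacle completely explicit.
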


\begin{proof}
        We first prove that each $g_i$ lies in the ideal $\langle f_0,
        \ldots, f_e\rangle$.

	Let $\cM_{i} \smallsetminus \cT_{i} = \{u_1\ldots, u_t\}$,
        with $ u_1\succ \cdots \succ u_t$.  We let
        $U_k=\{u_1, \ldots, u_k\}$ and $U_0 = \emptyset$. 
	A walk $\cW = (\cP_1, \ldots ,
        \cP_d)$ is a called a \emph{$k$-walk} if
        $m_ir^{\ell(\cP_1)+\cdots+\ell(\cP_j)} \in U_k $ for $1\leq
        j\leq d-1$.  Note that we do not require $m_ir^{\ell(\cW)}$ to
        be contained in $U_k$.  A $0$-walk is a path, and a $t$-walk
        is a walk.

For  $0\leq k \leq t$ we let 
	\[ 
	h_k = m_i + \sum_{m\in \cM_i \smallsetminus U_k}
         \left( 	\sum_{\substack{\cW \text{ k-walk from } \\m_i \text{ to } 
	m}} a_{\cW}
	\right) m,\] 
	where the inner sum is over all 
	$k$-walks from $m_i$ to $m$. Note that 
	$h_0 = f_i$ and $h_t = g_i$.

	Now for $k\geq 1$, we have  
	\begin{IEEEeqnarray}{rCl}
		\IEEEeqnarraymulticol{3}{l}{
		h_{k-1} - f_{j(u_{k})}\frac{u_{k}}{m_{j(u_{k})}}
		\left( \sum_{\substack{\cW \text{ $(k-1)$-walk} \\ \text{from }
		m_i \text{ to } u_{k}}} a_{\cW} 
	\right)} \nonumber\\
	&=&  m_i+\sum_{m\in \cM_i \smallsetminus U_{k-1}}
	\left(\sum_{\substack{\cW \text{ $(k-1)$-walk} \\ \text{from } m_i \text{ to } 
	m}}  
	a_{\cW}\right) m \nonumber \\
&&	-\left( m_{j(u_k)} + 
	\sum_{\widetilde{m}\in \cM_{j(u_{k})}} 
	\left(\sum_{\substack{\cP \text{ path from}\\ m_{j(u_{k})} \text{ to } \widetilde{m}}} a_{\cP} 	\right)\widetilde{m}\right) \frac{u_{k}}{m_{j(u_{k})} }
	\left( \sum_{\substack{\cW \text{ $(k-1)$-walk} \\\text{ from } m_i \text{ to } u_{k}}}   a_{\cW} \right)\nonumber \\
&	=&  
	m_i+ \sum_{m\in \cM_i \smallsetminus U_{k}}
	\left[
	\sum_{\substack{\cW \text{ $(k-1)$-walk} \\ \text{from } m_i \text{ to } m}}  
	a_{\cW}  - 
	\left(
	\sum
	_{\substack{\cP \text{ path from}\\ m_{j(u_{k})} \text{ to }\\\frac{ mm_{j(u_k)}}{u_k}}}
a_{\cP} 	
	\right)  
	\left( \sum
	_{\substack{\cW \text{ $(k-1)$-walk} \\\text{ from } m_i \text{ to } u_{k}}}
	a_{\cW} \right)
	\right] m \nonumber \\
& = & h_k, \nonumber
	\end{IEEEeqnarray}
where the second-to-last equality follows from the fact that the
coefficient of $u_{k}$ cancels, and the last follows from the
definition of a walk.  It follows that $g_i = h_t \in \langle f_0,
\cdots, f_e \rangle$.  Since 
$\langle \inn_{\prec}(g_0), \ldots,
\inn_{\prec}(g_e) \rangle = M = \langle \inn_{\prec}(f_0), 
\ldots, \inn_{\prec}(f_e)\rangle$ and 
$\{f_0, \ldots, f_e\}$ is a 
Gr{\"o}bner basis for $I_{\prec}(M)$  by Theorem~\ref{t:Evain}, 
it follows that $\{g_0, \ldots, g_e\}$
is a Gr{\"o}bner basis for $I_{\prec}(M)$.  The only monomials
occurring in $g_i$ are $m_i$ and standard monomials, so
$\{g_0,\ldots,g_e\}$ is a reduced Gr\"obner basis.
\end{proof}

\begin{lemma}
	\label{prop:reduction}
	Fix $m \in M$.  Let $\cT_m$ denote the set of monomials $s
        \not \in M$ of the same degree as $m$ with $s \prec m$.  Then
	\[ m \equiv \sum_{s\in \cT_m} \left( \sum _{\substack{ \cS 
	\text{ stroll from } \\ m \text{ to } s}}a_{\cS}\right) s 
	\mod I_{\prec}(M).\]
\end{lemma}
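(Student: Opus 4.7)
The plan is to compute the reduction of $m$ modulo the reduced Gr\"obner basis $\{g_0, \ldots, g_e\}$ of $I_\prec(M)$ produced by Lemma~\ref{l:reducedbasis}, and show by well-founded induction on $m$ (with respect to $\prec$) that this reduction equals the claimed sum indexed by strolls. Well-foundedness of the induction is immediate once I observe that $\prec$ is a well-order on monomials and that $m\,r^{\ell} \prec m$ for every $\ell > 0$, since multiplication by $r = x^{\beta}y^{-\alpha}$ strictly decreases the $y$-degree while $\prec$ is lex with $x \prec y$.

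First, I would carry out a single reduction step. Since $g_{j(m)} = m_{j(m)} + \sum_{s \in \cT_{j(m)}} \bigl(\sum_{\cW_1} a_{\cW_1}\bigr) s$ and $m = (m/m_{j(m)}) \cdot m_{j(m)}$, multiplying by $m/m_{j(m)}$ gives
\[ m \equiv -\sum_{\cW_1} a_{\cW_1}\, m\, r^{\ell(\cW_1)} \pmod{I_\prec(M)}, \]
where the sum ranges over walks $\cW_1$ from $m_{j(m)}$, and I use that $(m/m_{j(m)}) \cdot (m_{j(m)} r^{\ell(\cW_1)}) = m\, r^{\ell(\cW_1)}$ since $s = m_{j(m)} r^{\ell(\cW_1)}$ for the standard monomial target $s$ of $\cW_1$.

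Next, I split each term according to whether $m\,r^{\ell(\cW_1)}$ is a standard monomial or still lies in $M$. In the first case $s := m\,r^{\ell(\cW_1)} \in \cT_m$, and the unique stroll $\cS = (\cW_1)$ from $m$ to $s$ has $|\cS|=1$, so $a_\cS = (-1)^1 a_{\cW_1} = -a_{\cW_1}$, which exactly matches the term above. In the second case, $m\,r^{\ell(\cW_1)} \prec m$ lies in $M$, so the inductive hypothesis yields
\[ m\,r^{\ell(\cW_1)} \equiv \sum_{s'} \sum_{\cS' : m\,r^{\ell(\cW_1)} \to s'} a_{\cS'}\, s' \pmod{I_\prec(M)}. \]
Concatenating $\cW_1$ with $\cS' = (\cW_2, \ldots, \cW_d)$ produces precisely a stroll $\cS = (\cW_1, \cW_2, \ldots, \cW_d)$ from $m$ to $s'$ with $|\cS| = |\cS'|+1$, and by definition every stroll from $m$ arises in this way for a unique choice of initial walk $\cW_1$. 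A direct sign check gives
\[ a_\cS \;=\; (-1)^{|\cS|} \prod_{k=1}^d a_{\cW_k} \;=\; -\,(-1)^{|\cS'|}\, a_{\cW_1} \prod_{k=2}^d a_{\cW_k} \;=\; -a_{\cW_1}\, a_{\cS'}, \]
so substituting the inductive formula into $-a_{\cW_1}\, m\, r^{\ell(\cW_1)}$ produces exactly the terms $a_\cS s'$. Summing over all initial walks $\cW_1$ yields the claimed identity.

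The main technical point is this sign bookkeeping: the factor $(-1)^{|\cS|}$ built into $a_\cS$ is precisely what is needed to absorb the minus sign produced at each reduction step, so the inductive step fits together without residual signs. Everything else is a straightforward expansion using Lemma~\ref{l:reducedbasis} and the recursive structure of strolls as a walk followed by a shorter stroll.
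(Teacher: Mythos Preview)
Your proof is correct and follows essentially the same approach as the paper's: both argue by induction, perform a single reduction of $m$ using the reduced Gr\"obner basis element $g_{j(m)}$ from Lemma~\ref{l:reducedbasis}, split the resulting sum according to whether $m\,r^{\ell(\cW_1)}$ is standard or lies in $M$, and then invoke the inductive hypothesis and the recursive structure of strolls. The paper inducts on the number of monomials $m'\in M$ of the same degree with $m'\preceq m$, while you induct directly on $m$ under $\prec$; these are equivalent, and your explicit sign verification $a_{\cS}=-a_{\cW_1}a_{\cS'}$ makes the bookkeeping slightly clearer than in the paper.
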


\begin{proof}
We proceed by induction on the number of monomials $m' \in
        M$ with $\deg(m')=\deg(m)$ and $m' \preceq m$.   The base case is when $m$ is the
        smallest monomial of its degree in $M$ with respect to $\prec$.  Note
        that in this case 
	all monomials occurring in $\frac{m}{m_{j(m)}}g_{j(m)}$
        other than $m$ are standard monomials, so we have
	\[m -  g_{j(m)}\frac{m}{m_{j(m)}} = -
	\sum_{s=mm'/m_{j(m)}, m'  \in \cT_{m_{j(m)}}} \left( \sum _{\substack{ \cW \text{ walk
              from } \\ m_{j(m)} \text{ to } m'}}a_{\cW}\right) s.\] Note that
        since $m$ is the smallest monomial of its degree in $M$, a
        stroll from $m$ to a standard monomial $s$ is the same as a
        walk, and the base case follows.  Now suppose that the claim
        is true whenever there are fewer smaller monomials in $M$ of
        the same degree.   Then 
	\begin{equation}\label{eq:1}
		m -  g_{j(m)}\frac{m}{m_{j(m)}} = 
- \sum_{m'=\frac{m}{m_{j(m)}}m'',  m'' \in \cT_{m_{j(m)}}}
	\left( \sum _{\substack{ \cW
	\text{ walk from } \\ m_{j(m)} \text{ to } m'
	\frac{m_{j(m)}}{m} }}a_{\cW}\right) m',\end{equation} 
	where 
	$m' \prec m$. If $m'$ is standard, then 
	a walk from $m$ to $m'$ is a stroll. If $m'
	\in M$, by induction we have
	\begin{equation}\label{eq:2}
		m' 
	\equiv \sum_{s\in \cT_{m'}} \left( \sum _{\substack{ \cS 
	\text{ stroll from } \\ m' \text{ to } s}}a_{\cS}\right) s 
	\mod I_{\prec}(M).\end{equation}
	Now a walk from $m_{j(m)}$ to $m'\frac{m_{j(m)}}{m}$ 
	occurring in \eqref{eq:1} combines with a stroll from 
	$m'$ to $s$
	occurring in \eqref{eq:2} to give a stroll from $m$ 
	to $s$.  As  every stroll occurs in this 
	way, the claim follows. 
\end{proof}

\begin{theorem}\label{thm:edgeequations}
	 Fix a $\mathbb Z^2/\mathbb Z \mathbf{c}$-grading and a term
         order $\prec$ on $\K[x,y]$.  Let $M , N \subset \K[x,y]$ be
         monomial ideals with Hilbert function $H$.  Suppose $M>N$
         with respect to the partial order of
         Definition~\ref{d:posetorder} induced by $\prec$.  
         For a minimal generator $n$  of $N$, and  $s$ a
         standard monomial for $M$ with $\deg(m) = \deg(s)$,
	 let
	\[F_{(n,s)} = \sum_{\substack{ \cH \text{ hike from }
	n \text{ to } s}} a_{\cH} 
	\in \K[T_+(M), T_-(N)].\]

	Then the ideal for the edge-scheme $E(M,N)$ parameterizing
        one-dimensional torus orbits containing $M$ and $N$ is given
        by
	\begin{equation*}
		I(M,N)    =
		\langle F_{(n,s)} \mid 
 n \text{ a
minimal generator 
of }N, s \not \in M, \deg(n)=\deg(s) 
	\rangle.
\end{equation*}
\end{theorem}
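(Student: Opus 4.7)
The plan is to apply the Altmann--Sturmfels algorithm \cite[Algorithm 5]{AltmannSturmfels} to compute the ideal of the edge-scheme $E(M,N) = C_\prec(M) \cap C_{\prec^{\opp}}(N)$. By Theorem~\ref{t:Evain}, the two cells are affine spaces with parameter rings $\K[T_+(M)]$ and $\K[T_-(N)]$ (the latter because swapping $\prec$ with $\prec^{\opp}$ exchanges positive and negative significant arrows), carrying universal ideals $I_\prec(M)$ and $I_{\prec^{\opp}}(N)$. Pairing the two parameterizations embeds $E(M,N)$ into $\AA^{|T_+(M)|+|T_-(N)|} = \Spec \K[T_+(M),T_-(N)]$, with defining ideal generated by the coefficients of the standard monomials of $M$ that appear when each generator of $I_{\prec^{\opp}}(N)$, extended to $\K[T_+(M),T_-(N)][x,y]$, is reduced modulo the Gr\"obner basis of $I_\prec(M)$. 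Indeed, vanishing of these coefficients at $(a,b)$ is equivalent to the specialized ideals $I_\prec(M)|_a$ and $I_{\prec^{\opp}}(N)|_b$ coinciding, because both have Hilbert function $H$.

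First I would apply Lemma~\ref{prop:evainbasis} to $N$ with respect to $\prec^{\opp}$ to get the Evain generators
\[\tilde{f}_i = n_i + \sum_{n\succ n_i,\ \deg n=\deg n_i}\left(\sum_{\cP'\ \text{path w.r.t.\ }\prec^{\opp}\ \text{from}\ n_i\ \text{to}\ n} a_{\cP'}\right)n,\]
where the path variables lie in $T_-(N)$ and $n = n_i r^{\ell(\cP')} \succ n_i$. Then Lemma~\ref{prop:reduction} rewrites each monomial $m$ appearing in $\tilde{f}_i$ that lies in $M$ as $\sum_s(\sum_{\cS}a_{\cS})s$, summed over strolls $\cS$ from $m$ to standard monomials $s$ of $M$; monomials that are already standard for $M$ contribute via the trivial stroll with $a_{\cS}=1$.

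Collecting the coefficient of a fixed standard monomial $s$ of $M$ with $\deg s = \deg n_i$ in the reduction of $\tilde{f}_i$, each contribution is indexed by a pair $(\cP', \cS)$: either $\cP'$ is the zero-length arrow with terminal monomial $n_i$ and $\cS$ is a stroll from $n_i$ to $s$, or $\cP'$ is a nontrivial $\prec^{\opp}$-path from $n_i$ to some $m = n_i r^{\ell(\cP')} \succ n_i$ and $\cS$ is a stroll from $m$ to $s$. This is exactly the definition of a hike from $n_i$ to $s$, so the coefficient equals $F_{(n_i, s)}$. Consequently $I(M,N)$ is generated by the $F_{(n,s)}$, and since every constituent construction is defined over $\ZZ$, the $F_{(n,s)}$ have integer coefficients.

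The main obstacle is the combinatorial bookkeeping in the central step: one must verify that the recursive structure of $\prec^{\opp}$-paths for $N$, composed with the walks from Lemma~\ref{l:reducedbasis} and the strolls from Lemma~\ref{prop:reduction}, enumerates each hike exactly once with the sign pattern encoded in $a_{\cH} = a_{\cP'}a_{\cS}$, and that no cancellations between distinct hikes are overlooked. Once this bookkeeping is in place, the identification of the reduction coefficient with $F_{(n_i,s)}$ is direct.
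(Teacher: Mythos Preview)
Your proposal is correct and follows essentially the same route as the paper: apply the Altmann--Sturmfels reduction using Evain's Gr\"obner basis for $I_\prec(M)$ and the Evain generators $\tilde f_i$ for $I_{\prec^{\opp}}(N)$, then invoke Lemmas~\ref{prop:evainbasis} and~\ref{prop:reduction} to identify the coefficient of each standard monomial $s$ in the reduction of $\tilde f_i$ with the hike polynomial $F_{(n_i,s)}$. The only difference is one of rigor rather than strategy: where you justify the algorithm by checking pointwise that vanishing of the coefficients forces the two specialized ideals to coincide (using equality of Hilbert functions), the paper instead verifies scheme-theoretically that the zero locus of the $F_{(n,s)}$ satisfies the universal property of the fiber product $C_\prec(M)\times_{\Hc(H)}C_{\prec^{\opp}}(N)$, arguing via locally free modules over an arbitrary base $R$.
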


\begin{proof}
The theorem follows from applying the ideas of 
	\cite[Algorithm 5]{AltmannSturmfels} 
	 to the reduced Gr{\"o}bner basis $\{g_i\}$ for 
	 $I_{\prec}(M)$
	 of Lemma \ref{l:reducedbasis}  and using Lemma \ref{prop:reduction} to   
	 reduce the Gr{\"o}bner basis $\{\widetilde{f_i}\}$ 
	 for $I_{\prec^{\opp}}(N)$.

The edge scheme $E(M,N)$ is the scheme-theoretic intersection
$C_{\prec}(M) \cap C_{\prec^{\opp}}(N)$.  This equals the fiber product
  $C_{\prec}(M) \times_{\Hc(H)} C_{\prec^{\opp}}(N)$.  Thus to show
  that $E(M,N) = \Spec(\K[T_+(M), T_-(N)]/I(M,N))$, it suffices to show
  that the subscheme $C'$ of  \\ $\Spec(\K[T_+(M),T_-(N)]) = C_{\prec}(M)
  \times C_{\prec^{\opp}}(N)$ defined by $I(M,N)$ equals this fiber
  product.

To do this, we show that $C'$ satisfies the universal property of the
fiber product.  Indeed, let $i_M$ and $i_N$ be the inclusion morphisms
of $C_{\prec}(M)$ and $C_{\prec^{\opp}}(N)$ into $\Hc(H)$.  Suppose two
morphisms $\phi_M : \Spec(R) \rightarrow C_{\prec}(M)$ and $\phi_N :
\Spec(R) \rightarrow C_{\prec^{\opp}}(N)$ satisfy $i_M \circ \phi_M =
i_N \circ \phi_N$.  Let $\phi : \Spec(R) \rightarrow
\Spec(\K[T_+(M),T_-(N)])$ be the product $\phi_M \times \phi_N$.  We
will show that $\phi^*\left(F_{(n,s)}\right)=0$ for all pairs $(n,s)$, which
shows that $\phi$ factors through $C'$ with 
$\phi_M = p_1\circ \phi$ and $\phi_N =p_2 \circ \phi$, 
as required.  

To see this, let $I_M \subseteq R[x,y]$ be the ideal of the pull-back
of the universal family on $C_{\prec}(M)$ to $\Spec(R)$, and let $I_N
\subseteq R[x,y]$ be the ideal of the pull-back of the universal
family on $C_{\prec^{\opp}}(N)$ to $\Spec(R)$.  
The ideal $I_M$ is
generated by the polynomials $\phi_M^*(g_i)$ and 
the ideal $I_N$ is similarly 
generated by $\phi_N^*(f_i)$.
Our assumption that the induced maps to $\Hc(H)$ coincide imply that
$I_M=I_N$.

Fix a minimal generator $n=n_i$ of $N$.  Let $\cN_i= \{ n' \in
\Mon(\K[x,y]) \mid n'\prec^{\opp} n_i \text{ and } \deg(n') =
\deg(n_i)\}$.

Then for the generator $\tilde{f}_i$ of $I_{\prec^{\opp}}(N)$, we have 
\begin{IEEEeqnarray*}{rCl}
\widetilde{f_i} &=& 
n_i + \sum_{n\in \cN_i}\left( \sum_{\substack{\cP
	\text{ path from } \\n_i \text{ to }n\text{ for } \prec^{\opp}} 
	} a_{\cP}\right) n \\
&	=  &   
\sum _{\substack{s \text{ standard monomial} \\ \text{ 
for }M,   \deg(s) = \deg(n_i) }}
\left(\sum_{\substack{\cH \text{ hike 
from }\\ n_i  \text{ to } s}} a_{\cH}\right)s 
 + g,
\end{IEEEeqnarray*}
where $g\in I_{\prec}(M)$.  

By the definition of the multigraded Hilbert scheme,  the $R$-module
$(R[x,y]/I_M)_a$ is locally free of rank $H(a)$, where $a =
\deg(n_i)$.  Since $M \in \Hc(H)$, there are $H(a)$ standard monomials
of $M$ of degree $a$.  We claim that the standard monomials $s$ of $M$
of degree $a$ span $(R[x,y]/I_M)_a$.  If not, there is $\tilde{f}= \sum
c_m m \in R[x,y]_a$, where $c_m \in R$ and the $m$ are monomials that
are not in the span of the standard monomials modulo $I_m$.  We can choose
this polynomial to have $m'=\max_{\prec} \{ m : c_m \neq 0, m \in M
\}$ as small as possible.  Choose $i$ so that $m_i$ divides $m'$.
Then $\tilde{f} - c_{m'} m'/m_i \phi_M^*(g_i)$ is a polynomial with
smaller such maximum, so can be written in the desired form.  But then
the fact that $\phi_M^*(g_i) \in I_M$ gives a contradiction.  For a
prime $P$ of $R$ the $R_P$-module $(R_P[x,y]/R_PI_M)_a$ is free of
rank $H(a)$, so the spanning monomials must be a basis, and thus have
no relations between them.  
Since $\phi_N^*(\tilde{f}_i) \in I_N=I_M$
and $\phi_M^*(g) \in I_M$, we have $ \sum \phi^*(F_{(n_i,s)}) s \in
I_M$, where the sum is as above over the set of $s \not \in M$ with
$\deg(s) = \deg(n_i)$.

This means that the image of $\sum
\phi^*(F_{n_i,s,})s \in R_P[x,y]$ in each localization at  prime $P$
of $R$ must vanish, and so $\sum \phi^*(F_{n_i,s}) s =0$ in $R[x,y]$.  This means that
$\phi^*(F_{n_i,s})=0$ as required.  
\end{proof}

\begin{example}
Let $M= \langle x^8, x^5y, x^3y^3,y^4\rangle$ be the ideal from 
Example~\ref{ex:thebigexample} and let $N= \langle x^8, x^5y, x^2y^2,
y^6 \rangle$.  We order the generators for $N$ with respect to
$\prec^{\opp}$, and denote the significant arrows by by
$\widetilde{c}_i^{\ell}$.  We have $n_0 = y^6, n_1 = x^2y^2, n_2=x^5y,
n_3= x^8$, and the significant arrows are $\widetilde{c}_1^{1},
\widetilde{c}_1^2, \widetilde{c}_2^4$.  So the paths from $x^5y$ are $
\widetilde{c}_1^{1}, \widetilde{c}_1^2, \widetilde{c}_2^4$.  Then
		\[F_{(x^5y,x^6)} = c_1^1 +
		\widetilde{c_1^2}c_2^3 + \widetilde{c_1^2}c_2^1c_1^1c_1^1 + \widetilde{c_2^4} S_{xy^5,x^6}.\]

	Not all polynomials look this complicated. For example, 
	$F_{(x^2y^2,xy^3)} = \widetilde{c}_1^1+\widetilde{c}_1^2(
	c_3^1+c_2^1+c_1^1)$.
\end{example}
\begin{remark}
There can be significant cancellation in the equations $F_{(n,s)}$ for
$E(M,N)$ given in Theorem~\ref{thm:edgeequations}.  It would be
interesting to have a positive formula for these polynomials.  For
example, this would let us approach the question, asked in
\cite{AltmannSturmfels}, of whether the $T$-graph depends on the
characteristic of the field when $\K$ is algebraically closed.
\end{remark}

\section{Examples}

In this section we consider four different examples of $T$-graphs on
multigraded Hilbert schemes.

\subsection{The $T$-graph of $\Hilb^4(\mathbb A^2)$}.\label{s:n4}

The first nontrivial case of the $T$-graph of a multigraded Hilbert
scheme is the Hilbert scheme of four points in $\mathbb A^2$.  The
torus-fixed points in this case correspond to monomial ideals in
$\K[x,y]$ with four standard monomials, or equivalently to partitions
of four.  There are five such ideals/partitions:
\begin{enumerate}
\item $4$: $\langle x^4,y \rangle$,
\item $3+1$: $\langle x^3, xy, y^2 \rangle$,
\item $2+2$: $\langle x^2, y^2 \rangle$,
\item $2+1+1$: $\langle x^2,xy,y^3\rangle$,
\item $1+1+1+1$: $\langle x,y^4 \rangle$.
\end{enumerate}

\begin{figure}
\center{\resizebox{!}{5cm}{\includegraphics{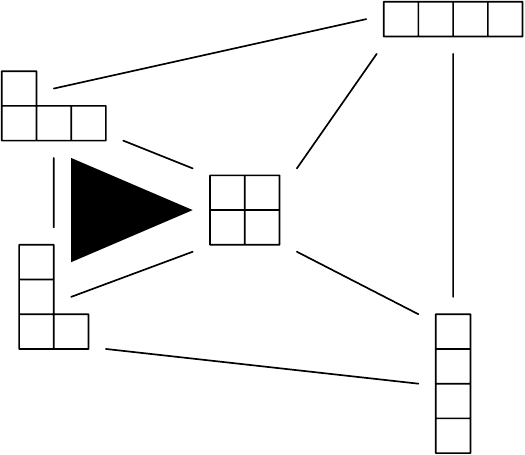}}}
\caption{\label{f:graphn4} The $T$-graph of $4$ points in $\mathbb
  A^2$.}
\end{figure}

Using this numbering, the edges of this graph are then $$\{ (1,2), (1,
3), (1, 5), (2, 3), (2, 4), (3, 4), (3, 5), (4, 5)\}.$$ Most of these
edges are one-dimensional.  We list them in the following table,
where the parameter $a$ can be any nonzero element of $\K$.
\begin{center}
\begin{tabular}{lll}
Edge & $\mathbf{c}$ & ideal \\ \hline
$(1,2)$ & $(1,-3)$ &  $\langle x^4, y-ax^3  \rangle$  \\
 $(1, 3)$& (1,-2)  & $\langle x^4,y-a x^2\rangle$  \\ 
$(1, 5)$&  (1,-1)  & $\langle x^4,y-ax   \rangle$ \\ 
$(2, 3)$&  (1,-1)  & $\langle  x^3,xy-ax^2,y^2  \rangle$ \\ 
$(3, 4)$&   (1,-1) & $\langle x^2,y^2-axy  \rangle$ \\ 
$(3, 5)$&   (2,-1) & $\langle x^2,y^2-ax   \rangle$ \\ 
$(4, 5)$&   (3,-1) & $\langle x^2,xy, y^3-ax  \rangle$ \\ 
\end{tabular}
\end{center}

The interesting edge is the one joining the second and fourth ideals
indicated by the black triangle.  It consists of the ideals
$$\langle x^3, xy-ax^2, y^2-(a^2+b)x^2 \rangle,$$ for any value of
$a,b \in \K^*$ with $a \neq b^2$.  Note that the edges $(2,3)$ and
$(3,4)$ live in the closure of this edge (by letting $b=-a^2$ or $a
\rightarrow \infty$ respectively).  This is shown in Figure~\ref{f:graphn4}.

\subsection{The $T$-graph of two  points in $\mathbb P^2$}

We now consider the case of the Hilbert scheme of two points in
$\mathbb P^2$.  As a multigraded Hilbert scheme this corresponds to
requiring the Hilbert function to be $h(0)=1$, $h(1)=3$, and $h(i)=2$ for $i
\geq 2$ for ideals in $\K[x_0,x_1,x_2]$.  The $T$-graph has nine
vertices, which we label by the saturations of the corresponding
monomial ideals, as these have fewer generators.  Explicitly, for an
ideal $M \in \Hilb^h_S$, we label the vertex by $(M: \langle
x_0,x_1,x_2 \rangle^{\infty})$.  We can recover $M$ from its
saturation by taking the ideal generated by the degree-two part of the
saturation.  The nine saturated ideals are:
\begin{center}
\begin{tabular}{llllllll}
1 & $\langle x_0,x_1x_2 \rangle $  & \hspace{1cm} & 2 & $\langle x_1,x_2x_3 \rangle $ & \hspace{1cm}  & 3 & $\langle x_2,x_0x_1 \rangle $ \\
4 & $\langle x_0,x_2^2 \rangle $ & \hspace{1cm}  & 5 & $\langle x_0,x_1^2 \rangle $ & \hspace{1cm}  & 6 & $\langle x_1,x_0^2 \rangle $ \\
7 & $\langle x_1,x_2^2 \rangle $ & \hspace{1cm}  & 8 & $\langle x_2,x_1^2 \rangle $ & \hspace{1cm}  &9 & $\langle x_2,x_0^2 \rangle $ \\
\end{tabular}
\end{center}
The $T$-graph then has $18$ edges.  Up to the $S_3$-symmetry, these
are the pairs: $\{ (\langle x_0,x_1x_2 \rangle, \langle x_1,x_0x_2
\rangle), (\langle x_0,x_1x_2 \rangle , \langle x_0,x_1^2 \rangle),
(\langle x_0,x_1^2 \rangle, \langle x_1,x_0^2 \rangle), (\langle
x_0,x_1^2 \rangle, \langle x_0,x_2^2 \rangle ),$ \\ $ (\langle x_0,x_1^2
\rangle, \langle x_2, x_1^2 \rangle) \}$.  This is shown in
Figure~\ref{f:2ptsP2}.  The shaded triangles indicate that the edges
joining ideals $4$ and $5$, joining $6$ and $7$, and joining $8$ and $9$ are
two dimensional, and have the third vertex of the respective triangles
in their closure.  For example, ideals in the edge joining vertices $4$ and $5$
have the form $\langle x_0^2,x_0x_1,x_0x_2,x_1^2+ax_1x_2+bx_2^2
\rangle$ for $a,b \in K$ with $b \neq 0$.

\begin{figure}
\center{\resizebox{!}{5cm}{\input{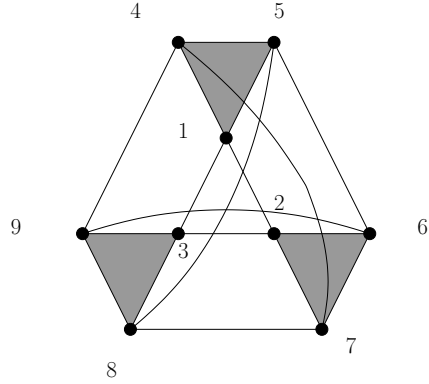}}}
\caption{\label{f:2ptsP2} The $T$-graph of $2$ points in $\mathbb
  P^2$.}
\end{figure}

\subsection{The $T$-graph of $\Hilb^8(\mathbb A^4)$}.  

The Hilbert scheme of $8$ point in $\mathbb A^4$ has two irreducible
components, of dimensions 32 and 25, which intersect in a scheme 
of dimension 24.  See \cite{CEVV} for more details.  The $T$-graph of
this Hilbert scheme has 684  vertices, and 9278 edges.  All vertices
lie on the component of dimension 32.  This data can be found in the
package {\tt TEdges}~\cite{TEdges}.

\subsection{Small Hilbert schemes of points in the plane}

We list here the data for the Hilbert schemes $\Hilb^d(\mathbb A^2)$
for small values of $d$.  This illustrates the use of our necessary
condition in this range.

\begin{center}
\begin{tabular}{lllllll}\label{table}
$d$ & \# ideals   & \# pairs & \# pairs & \# pairs $(M,N)$  & \# pairs $(M,N)$    & \# edges \\ 
& $M$  & $(M,N)$ & $M<N$ &  with an arrow  & with an arrow map & \\ 
&  & & & map  & on the duals& \\ \hline
4 & 5 &10 &8 &8 & 8 &8 \\
5 & 7& 21 &15 &15 &15 & 15\\
6 & 11&55 & 37&37 &37 &37 \\
7 & 15 & 105 &55 &52 &52 &52 \\
8 &22 & 231& 100& 99 &99 &99 \\
9 &30 & 435 &170 &166 &166 &166 \\
10 & 42& 861 &291 & 280 &280 &280 \\
11 & 56& 1540 &411 &401 & 401 &401 \\
12 & 77& 2926& 688 &663 &663 & 663\\
13 & 101& 5050 &957 & 918& 918 & 918 \\
14 & 135& 9045 &1524  & 1446 & 1446& 1446\\
15 & 176 & 15400 & 2203 & 2076 &2076 & 2076\\
16 & 231 & 26565&  3218& 3033 & 3031 & 3031 \\
\end{tabular}
\end{center}


\begin{remark}
This table was created with the Macaulay 2 package
{\tt{TEdges}}~\cite{TEdges}.  For $d\leq 15$ the edge code was run
independently from the partial order and arrow-map code, so the
containment of the set of edges in the set for which there exist
arrow-maps gives a check on the code.  This was not possible for
$d>15$ for memory usage reasons.  While this table shows that the
necessary conditions of Theorem~\ref{thm:main} are sufficient for
small $d$ in $\Hilb^d(\mathbb A^2)$, we caution that 
$16$ points is
still comparatively small for this problem, so do not regard this as
strong evidence of the condition being sufficient, particularly in
light of Example~\ref{ex:almostall}.
\end{remark}

\bibliographystyle{alpha}
\bibliography{HilbertSchemePoints}

\begin{thebibliography}{BBCM02}

\bibitem[AS05]{AltmannSturmfels}
Klaus Altmann and Bernd Sturmfels.
\newblock The graph of monomial ideals.
\newblock {\em J. Pure Appl. Algebra}, 201(1-3):250--263, 2005.

\bibitem[BBCM02]{BCM}
Andrzej Bia{\l}ynicki-Birula, James~B. Carrell, and W.~Monty McGovern.
\newblock {\em Algebraic quotients. {T}orus actions and cohomology. {T}he
  adjoint representation and the adjoint action}, volume 131 of {\em
  Encyclopaedia of Mathematical Sciences}.
\newblock Springer-Verlag, Berlin, 2002.
\newblock Invariant Theory and Algebraic Transformation Groups, II.

\bibitem[BCS08]{BradenChenSottile}
Tom Braden, Linda Chen, and Frank Sottile.
\newblock The equivariant {C}how rings of {Q}uot schemes.
\newblock {\em Pacific J. Math.}, 238(2):201--232, 2008.

\bibitem[BH93]{BrunsHerzog}
Winfried Bruns and J{\"u}rgen Herzog.
\newblock {\em Cohen-{M}acaulay rings}, volume~39 of {\em Cambridge Studies in
  Advanced Mathematics}.
\newblock Cambridge University Press, Cambridge, 1993.

\bibitem[CEVV09]{CEVV}
Dustin~A. Cartwright, Daniel Erman, Mauricio Velasco, and Bianca Viray.
\newblock Hilbert schemes of 8 points.
\newblock {\em Algebra Number Theory}, 3(7):763--795, 2009.

\bibitem[CLO07]{CoxLittleOshea}
David Cox, John Little, and Donal O'Shea.
\newblock {\em Ideals, varieties, and algorithms}.
\newblock Undergraduate Texts in Mathematics. Springer, New York, third
  edition, 2007.
\newblock An introduction to computational algebraic geometry and commutative
  algebra.

\bibitem[Eis95]{Eisenbud}
David Eisenbud.
\newblock {\em Commutative algebra}, volume 150 of {\em Graduate Texts in
  Mathematics}.
\newblock Springer-Verlag, New York, 1995.
\newblock With a view toward algebraic geometry.

\bibitem[Eva02]{EvainSchubert}
Laurent Evain.
\newblock Incidence relations among the {S}chubert cells of equivariant
  punctual {H}ilbert schemes.
\newblock {\em Math. Z.}, 242(4):743--759, 2002.

\bibitem[Eva04]{EvainIrreducible}
Laurent Evain.
\newblock Irreducible components of the equivariant punctual {H}ilbert schemes.
\newblock {\em Adv. Math.}, 185(2):328--346, 2004.

\bibitem[Eva07]{EvainChow}
Laurent Evain.
\newblock The {C}how ring of punctual {H}ilbert schemes on toric surfaces.
\newblock {\em Transform. Groups}, 12(2):227--249, 2007.

\bibitem[GKM98]{GKM}
Mark Goresky, Robert Kottwitz, and Robert MacPherson.
\newblock Equivariant cohomology, {K}oszul duality, and the localization
  theorem.
\newblock {\em Invent. Math.}, 131(1):25--83, 1998.

\bibitem[GS]{M2}
Daniel~R. Grayson and Michael~E. Stillman.
\newblock Macaulay2, a software system for research in algebraic geometry.
\newblock Available at \href{http://www.math.uiuc.edu/Macaulay2/}%
  {http://www.math.uiuc.edu/Macaulay2/}.

\bibitem[Hai98]{HaimanQTCatalan}
Mark Haiman.
\newblock {$t,q$}-{C}atalan numbers and the {H}ilbert scheme.
\newblock {\em Discrete Math.}, 193(1-3):201--224, 1998.
\newblock Selected papers in honor of Adriano Garsia (Taormina, 1994).

\bibitem[Har66]{Hartshorne}
Robin Hartshorne.
\newblock Connectedness of the {H}ilbert scheme.
\newblock {\em Inst. Hautes \'Etudes Sci. Publ. Math.}, (29):5--48, 1966.

\bibitem[HS04]{HaimanSturmfels}
Mark Haiman and Bernd Sturmfels.
\newblock Multigraded {H}ilbert schemes.
\newblock {\em J. Algebraic Geom.}, 13(4):725--769, 2004.

\bibitem[Ive72]{Iversen72}
Birger Iversen.
\newblock A fixed point formula for action of tori on algebraic varieties.
\newblock {\em Invent. Math.}, 16:229--236, 1972.

\bibitem[Mac11]{TEdges}
Diane Maclagan.
\newblock {TE}dges, a {M}acaulay2 package to compute the {$T$}-graph of a
  multigraded {H}ilbert scheme.
\newblock Available at
  \href{http://www.warwick.ac.uk/staff/D.Maclagan/papers/TEdges.html}%
  {http://www.warwick.ac.uk/staff/D.Maclagan/papers/TEdges.html}, 2011.

\bibitem[MS05]{MillerSturmfels}
Ezra Miller and Bernd Sturmfels.
\newblock {\em Combinatorial commutative algebra}, volume 227 of {\em Graduate
  Texts in Mathematics}.
\newblock Springer-Verlag, New York, 2005.

\bibitem[MS10]{MaclaganSmithHilbert}
Diane Maclagan and Gregory~G. Smith.
\newblock Smooth and irreducible multigraded {H}ilbert schemes.
\newblock {\em Adv. Math.}, 223(5):1608--1631, 2010.

\bibitem[Mum62]{Mumford}
David Mumford.
\newblock Further pathologies in algebraic geometry.
\newblock {\em Amer. J. Math.}, 84:642--648, 1962.

\bibitem[Nak99]{Nakajimabook}
Hiraku Nakajima.
\newblock {\em Lectures on {H}ilbert schemes of points on surfaces}, volume~18
  of {\em University Lecture Series}.
\newblock American Mathematical Society, Providence, RI, 1999.

\bibitem[Nak01]{Nakamura}
Iku Nakamura.
\newblock Hilbert schemes of abelian group orbits.
\newblock {\em J. Algebraic Geom.}, 10(4):757--779, 2001.

\bibitem[PS02]{PeevaStillman}
Irena Peeva and Mike Stillman.
\newblock Toric {H}ilbert schemes.
\newblock {\em Duke Math. J.}, 111(3):419--449, 2002.

\bibitem[PS05]{PeevaStillmanBinomial}
Irena Peeva and Mike Stillman.
\newblock Connectedness of {H}ilbert schemes.
\newblock {\em J. Algebraic Geom.}, 14(2):193--211, 2005.

\bibitem[San05]{Santos}
Francisco Santos.
\newblock Non-connected toric {H}ilbert schemes.
\newblock {\em Math. Ann.}, 332(3):645--665, 2005.

\bibitem[Yam94a]{Yameogo1}
Joachim Yam{\'e}ogo.
\newblock D\'ecomposition cellulaire de vari\'et\'es param\'etrant des id\'eaux
  homog\`enes de {${\bf C}[\![x,y]\!]$}. {I}ncidence des cellules. {I}.
\newblock {\em Compositio Math.}, 90(1):81--98, 1994.

\bibitem[Yam94b]{Yameogo2}
Joachim Yam{\'e}ogo.
\newblock D\'ecomposition cellulaire de vari\'et\'es param\'etrant des id\'eaux
  homog\`enes de {${\bf C}[\![x,y]\!]$}. {I}ncidence des cellules. {II}.
\newblock {\em J. Reine Angew. Math.}, 450:123--137, 1994.

\end{thebibliography}
\end{document}